\numberwithin{equation}{section}
\newtheorem{theorem}{Theorem}[section]
\newtheorem{corollary}[theorem]{Corollary}
\newtheorem{lemma}[theorem]{Lemma}
\newtheorem{proposition}[theorem]{Proposition}
\newtheorem{conjecture}[theorem]{Conjecture}
\theoremstyle{definition}
\newtheorem{remark}[theorem]{Remark}
\newtheorem{definition}[theorem]{Definition}
\newtheorem{example}[theorem]{Example}
\newcommand{\C}{\mathbb{C}}
\newcommand{\D}{\mathbb{D}}
\newcommand{\B}{\mathbb{B}}
\newcommand{\R}{\mathbb{R}}
\renewcommand{\H}{\mathbb{H}}
\newcommand{\CH}{{\operatorname{CH}}}
\begin{document}
\title{On the Gromov hyperbolicity of the minimal metric}
\author{Matteo Fiacchi}
\address{Matteo Fiacchi, Faculty of Mathematics and Physics, University of Ljubljana, and  Institute of Mathematics, Physics, and Mechanics, Jadranska 19, 10000 Ljubljana, Slovenia.}
\email{matteo.fiacchi@fmf.uni-lj.si}
 \thanks{\textit{2010 Mathematics Subject Classification.}  53C23, 53A10, 32Q45, 30C80, 31A05.}
\thanks{\textit{Key words and phrases.} minimal surface, minimal metric, hyperbolic domain, Gromov hyperbolicity, convex domain, Hilbert metric.}
\thanks{The author is supported by the European Union (ERC Advanced grant HPDR, 101053085 to Franc Forstneri\v c) and the research program P1-0291 from ARIS, Republic of Slovenia.}
 
\maketitle

\begin{abstract}
In this paper, we study the hyperbolicity in the sense of Gromov of domains in $\R^d$ $(d\geq3)$ with respect to the minimal metric introduced by Forstneri\v c and Kalaj in \cite{FK}. In particular, we prove that every bounded strongly minimally convex domain is Gromov hyperbolic and its Gromov compactification is equivalent to its Euclidean closure. Moreover, we prove that the boundary of a Gromov hyperbolic convex domain does not contain non-trivial conformal harmonic disks. Finally, we study the relation between the minimal metric and the Hilbert metric in convex domains.
\end{abstract}

\section{Introduction}

In several complex variables, the Kobayashi metric plays a fundamental role in the study of domains in $\mathbb{C}^d$ and holomorphic maps between them.

In recent years, the metric approach to complex analysis has led to important results, ranging from geometric function theory to complex dynamics \cite{braccigaussier2, BGZ, AFGG, L22, Zim22, AFGK}. One of the main problems of this method is characterizing when the metric space $(D, k_D)$ is Gromov hyperbolic.

Gromov hyperbolicity is a coarse notion of negative curvature in the context of metric spaces and is very useful for studying metric spaces where the distance function may not arise from a Riemannian metric. The Gromov hyperbolicity of the Kobayashi metric has been extensively studied in recent years \cite{NTT, Zim1, Zim2, GS, Fia, LPW}.

In real Euclidean space, Forstneri\v{c} and Kalaj \cite{FK} defined the \textit{minimal metric}, the analog of the Kobayashi metric in the theory of minimal surfaces. The two metrics share many similarities, but there are notable differences. For example, since conformal minimal surfaces are much more abundant than holomorphic curves, the minimal metric is significantly more rigid than the Kobayashi metric, making some typical techniques of complex analysis inapplicable in this context.

Given $D \subset \mathbb{R}^d$ $(d \geq 3)$, we are interested in finding necessary and sufficient conditions for the minimal distance $\rho_D$ to be Gromov hyperbolic. The first natural class of domains to consider is bounded strongly minimally convex domains (see Section \ref{BSMC} for the definition), which are the analogs in the theory of minimal surfaces of strongly pseudoconvex domains.

\begin{theorem}\label{MAIN1}
Let $D \subset \mathbb{R}^d$ $(d \geq 3)$ be a bounded strongly minimally convex domain. Then the metric space $(D, \rho_D)$ is Gromov hyperbolic. Moreover, the identity map $D \to D$ extends as a homeomorphism between the Gromov compactification $\overline{D}^G$ and the Euclidean closure $\overline{D}$.
\end{theorem}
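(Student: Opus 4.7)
My plan is to follow the Balogh--Bonk paradigm developed for the Kobayashi metric on bounded strongly pseudoconvex domains, adapted to the minimal setting: derive sharp two-sided boundary estimates for $\rho_D$, then read off both Gromov hyperbolicity (via the four-point condition on Gromov products) and the identification of the Gromov boundary with the Euclidean boundary $\partial D$.

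Writing $\delta(x)=\operatorname{dist}(x,\partial D)$, the target estimates are, for all $x,y\in D$,
\[
\rho_D(x,y) \ge \tfrac{1}{2}\log\frac{1}{\delta(x)} + \tfrac{1}{2}\log\frac{1}{\delta(y)} - C,
\qquad
\rho_D(x,y) \le \tfrac{1}{2}\log\left(1+\frac{c\,|x-y|^{2}}{\delta(x)\,\delta(y)}\right)+C,
\]
for constants $c,C>0$ depending only on $D$. The upper bound is the easier side: for any pair $x,y$ near $\partial D$, one constructs explicit conformal minimal disks almost realising the infimum, exploiting the abundance of conformal harmonic maps $\D\to\R^{d}$ together with comparison to the ball $\B^{d}$ where $\rho_{\B^{d}}$ is explicit. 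For the lower bound, strong minimal convexity provides a global defining function $\varphi$ whose Hessian has two smallest eigenvalues with strictly positive sum on tangent directions. Pulled back along any conformal harmonic map $f:\D\to D$, the composition $\varphi\circ f$ is subharmonic with controlled boundary growth, and a subharmonic comparison argument then converts this into the desired lower bound on $\rho_D$.

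With the estimates in place, fix a basepoint $o\in D$. A routine manipulation gives, up to additive constants,
\[
(x|y)_{o} \asymp -\log\bigl(|x-y| + \max\{\delta(x),\delta(y)\}\bigr),
\]
and a direct algebraic check on four-tuples $(x,y,z,w)$ shows that the largest two of $(x|y)_o+(z|w)_o$, $(x|z)_o+(y|w)_o$, $(x|w)_o+(y|z)_o$ differ by $O(1)$, verifying Gromov's four-point condition. Hence $(D,\rho_D)$ is Gromov hyperbolic. The same asymptotic formula matches the two boundaries: a sequence $(x_n)\subset D$ with $x_n\to p\in\partial D$ Euclideanly satisfies $(x_n|x_m)_o\to\infty$ and is therefore a Gromov sequence; two such sequences with the same Euclidean limit are equivalent; and sequences with distinct Euclidean limits have bounded Gromov product. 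This yields a bijection $\overline{D}^{G}\to\overline{D}$ extending the identity, and continuity in both directions follows from checking that the neighbourhood bases of boundary points correspond under the Gromov-product estimate.

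The principal obstacle is the lower bound in the boundary estimate. In the Kobayashi setting one has access to strictly plurisubharmonic defining functions and the pluricomplex Green function; here the analogue must be a \emph{minimally plurisubharmonic} defining function, i.e.\ one whose Hessian satisfies $\lambda_1(H\varphi)+\lambda_2(H\varphi)\ge\kappa>0$ on tangent directions, a condition preserved precisely under pullback by conformal harmonic maps. Because conformal minimal disks in $\R^{d}$ are much more abundant and flexible than holomorphic disks in $\C^{d}$, controlling the extremal competitors in the infimum defining $\rho_D$ is considerably more delicate than in the complex case, and this is where I expect the bulk of the technical work to sit.
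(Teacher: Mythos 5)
Your high-level strategy is the Balogh--Bonk paradigm, which is indeed the route the paper takes, but the concrete boundary estimates you write down are false, and they are the entire content of the argument. Your lower bound $\rho_D(x,y)\ge\tfrac12\log\tfrac{1}{\delta(x)}+\tfrac12\log\tfrac{1}{\delta(y)}-C$ fails already for $x=y$ near $\partial D$ (left side $0$, right side $\to+\infty$), and more seriously it fails in the regime that matters for the four-point condition: for $x,y$ with $\delta(x)=\delta(y)=\epsilon$ and $\|x-y\|=\epsilon^{3/4}$ tangential, one has $\rho_D(x,y)\lesssim \epsilon^{3/4}/\epsilon^{1/2}\to 0$, while your lower bound blows up and in fact exceeds your own upper bound $\tfrac12\log(1+c\|x-y\|^2/(\delta(x)\delta(y)))\approx\tfrac14\log(1/\epsilon)$; the two claimed estimates are mutually inconsistent, so no Gromov-product computation can be run on them. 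The correct estimates are of Bonk--Schramm type, $2\log\bigl[(H(\pi(x),\pi(y))+h(x)\vee h(y))/\sqrt{h(x)h(y)}\bigr]\pm C$ with $h=\delta_D^{1/2}$ and $H$ the intrinsic distance on $\partial D$, and the exponent $1/2$ is forced by the anisotropic infinitesimal estimate $g_D(x,v)\asymp \|v_N\|/(2\delta_D(x))+\|v_T\|/\delta_D(x)^{1/2}$, which is the actual heart of the proof (Theorem 3.2 of the paper: upper bound by comparison with an inscribed ball where $g$ is the explicit Beltrami--Cayley metric, lower bound from the estimates of Drinovec Drnov\v sek--Forstneri\v c). Your proposal never isolates this normal/tangential splitting, and without it neither the sharp upper bound (descend normally to height $\approx H(\pi(x),\pi(y))^2$, travel tangentially, ascend) nor the matching lower bound (split an arbitrary curve at its highest point) can be established. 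Consequently your Gromov-product formula $-\log(\|x-y\|+\delta(x)\vee\delta(y))$ is also wrong: the correct one is $-\log(H(\pi(x),\pi(y))+\delta(x)^{1/2}\vee\delta(y)^{1/2})$, and already for $x=y$ the two disagree by a factor of $2$ since $(x|x)_o=\rho_D(x,o)\approx\tfrac12\log(1/\delta_D(x))$.

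Two further remarks. Your idea for the lower bound --- pulling back an MPSH defining function along a conformal harmonic disk to get a subharmonic function --- is sound and is essentially how the cited infinitesimal bounds $g_D(x,v)\gtrsim\|v\|/\delta_D(x)^{1/2}$ and $g_D(x,v)\gtrsim\|v_N\|/\delta_D(x)$ are obtained; but these are infinitesimal bounds, and integrating them into the correct two-sided distance estimate is exactly the case analysis you are skipping. Also, the paper does not prove additive-constant estimates for $\rho_D$ itself, which is what your plan requires and which would demand much finer control of extremal conformal minimal disks. Instead it defines an explicit Finsler metric $F$ equal to the model expression near $\partial D$, observes that its intrinsic distance $d_F$ is bilipschitz to $\rho_D$ (this needs only the infinitesimal estimates), proves $|d_F-d_H|\le B$ for the Bonk--Schramm distance $d_H$, verifies the four-point condition for $d_H$ by a one-line algebraic inequality, and concludes via the bilipschitz invariance of Gromov hyperbolicity and of the Gromov compactification. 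You should restructure your argument along these lines; your sketch of the boundary identification is then fine once the Gromov product formula is corrected.
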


This answers a question of Drinovec Drnov\v sek and Forstneri\v c \cite[Problem 12.7]{DrFor}.

Inspired by Balogh's and Bonk's work in strongly pseudoconvex domains \cite{BaBo}, the proof is based on estimates of the minimal metric near the boundary and the construction of the hyperbolic filling metric (\refeq{fillingmetric}).

The second main result involves the necessary conditions in convex domains. As already observed in Hilbert geometry \cite{KN, Ben} and complex geometry \cite{NTT, Zim1, Zim2, GS, ADF}, the ``flatness'' of the boundary is an obstruction to Gromov hyperbolicity.
\begin{theorem}\label{MAIN2}
	Let $D \subset \mathbb{R}^d$ $(d \geq 3)$ be a hyperbolic convex domain such that $(D, \rho_D)$ is Gromov hyperbolic. Then every conformal harmonic disk $f \colon \mathbb{D} \to \partial D$ is constant.
\end{theorem}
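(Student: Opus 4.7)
The approach is a proof by contradiction. Suppose $f\colon\D\to\partial D$ is a non-constant conformal harmonic disk.

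\textbf{Step 1 (Flat boundary face via the maximum principle).} For any supporting affine function $\phi\ge 0$ on $\overline D$ with $\phi(f(z_0))=0$ for some $z_0\in\D$, the composition $\phi\circ f$ is a non-negative harmonic function on $\D$ vanishing at an interior point. The minimum principle forces $\phi\circ f\equiv 0$, so $f(\D)$ lies in the flat boundary face $F:=\{\phi=0\}\cap\overline D$, which is contained in an affine hyperplane of $\R^d$.

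\textbf{Step 2 (Inward scaling and Gromov product blow-up).} Fix $p\in D$ and set $f_t(z):=(1-t)f(z)+tp$ for $t\in(0,1)$. Convexity gives $f_t(\D)\subset D$, and affinity of the deformation preserves harmonicity and conformality (the differential equals $(1-t)\,df$), so each $f_t$ is a conformal harmonic disk into $D$. The distance-decreasing property of the minimal metric yields
$$
\rho_D\bigl(f_t(z_1),f_t(z_2)\bigr)\le k_\D(z_1,z_2)\qquad\text{for all }z_1,z_2\in\D,
$$
while hyperbolicity of the convex domain $D$ gives $\rho_D(p,x)\to\infty$ as $x\to\partial D$. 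Hence for any $a,b\in\D$ with $f(a)\ne f(b)$,
$$
\bigl(f_t(a)\mid f_t(b)\bigr)_p=\tfrac12\bigl[\rho_D(p,f_t(a))+\rho_D(p,f_t(b))-\rho_D(f_t(a),f_t(b))\bigr]\longrightarrow+\infty
$$
as $t\to 0^+$, and Gromov hyperbolicity forces both sequences to converge to a single point $\xi\in\partial^GD$ of the Gromov boundary, despite their Euclidean limits $f(a)\ne f(b)$ being distinct.

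\textbf{Step 3 (The main obstacle: producing a contradiction).} Converting the identification $f(a),f(b)\mapsto\xi$ into a genuine contradiction is the crux. It is delicate because two Euclidean boundary points sharing the supporting hyperplane of $F$ also share the natural linear ``horofunction,'' so the Gromov collapse onto $\xi$ is not obstructed by horofunctions alone — indeed, it does occur in the non-Gromov-hyperbolic bi-disk. My plan is to exploit the two-dimensional transverse extent of $f(\D)$ in $F$ to build, from the scaling family $f_t$ together with $\rho_D$-geodesic rays escaping towards other boundary faces of $D$, a family of ``fat'' geodesic triangles in the spirit of the equilateral triples $\bigl((r,0),(0,r),(r,r)\bigr)$ inside the bi-disk with $r\to 1$; such a family violates the $\delta$-thin triangle characterization of Gromov hyperbolicity. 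The essential ingredient — and the step I expect to be the most technical — is a matching horospherical lower bound of the form
$$
\rho_D(x,y)\gtrsim\log\Bigl(1+\frac{|x-y|}{\max\{\delta(x),\delta(y)\}}\Bigr),
$$
valid in hyperbolic convex domains, which translates transverse Euclidean displacement along $F$ into $\rho_D$-displacement and thereby keeps the triangles fat as $t\to 0^+$. Establishing this lower bound amounts to adapting the boundary estimates that drive the proof of Theorem~\ref{MAIN1} from the strongly minimally convex setting to the more general hyperbolic convex setting.
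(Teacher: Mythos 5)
Your Step~1 reproduces the paper's Proposition~\ref{affinediskboundary} (you should additionally note that, since $f$ is conformal and non-constant, the convex hull of $f(\D)$ inside the face $F$ is genuinely two-dimensional -- the proof needs a flat \emph{disk}, not just a flat point, in $\partial D$). Step~2 is correct but, as you concede, produces no contradiction: identifying two Euclidean boundary points in the Gromov boundary is perfectly consistent with Gromov hyperbolicity. The entire burden of the proof therefore falls on Step~3, which you present only as a plan, so the proposal as written does not prove the theorem. Worse, the ``essential ingredient'' you propose is false in the stated generality: the lower bound $\rho_D(x,y)\gtrsim\log\bigl(1+||x-y||/\max\{\delta_D(x),\delta_D(y)\}\bigr)$ fails already in the cube $D=(0,1)^3$, which is a hyperbolic convex domain. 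Indeed, for $x=(1/4,\epsilon,1/2)$ and $y=(3/4,\epsilon,1/2)$ the $2$-plane $\Lambda=\mathrm{span}(e_1,e_3)$ stays at definite distance from $\partial D$ along $[x,y]$, so Lemma~\ref{cvxest2} gives $\rho_D(x,y)\leq 2$ uniformly in $\epsilon$, while your right-hand side tends to $+\infty$. Relatedly, ``adapting the boundary estimates that drive Theorem~\ref{MAIN1}'' is not an option here: Theorem~\ref{MAIN2} assumes neither boundedness nor any boundary regularity, so no estimate in terms of $\delta_D$ alone can be expected.

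What actually closes the argument in the paper is a much weaker, purely one-dimensional estimate valid for every hyperbolic convex domain (Lemma~\ref{cvxest1}): if $\xi$ lies on the line through $p,q$ and outside $D$, then $\rho_D(p,q)\geq\tfrac12\bigl|\log(||\xi-p||/||\xi-q||)\bigr|$, proved by projecting onto a supporting half-space and using Lemma~\ref{halfp}. Combined with the radial quasi-geodesics $t\mapsto\zeta+e^{-2t}(p-\zeta)$ of Lemma~\ref{qgcvx}, one takes $\xi$ in the relative interior of the flat face and $\eta$ on its relative boundary, forms the rays $\gamma,\sigma$ toward $\xi,\eta$ and the segments $[\gamma(T),\sigma(T)]$ (uniform quasi-geodesics because they run parallel to the face), and shows via Lemma~\ref{cvxest1} -- applied to lines exiting $D$ near $\eta$ -- that $\rho_D(\gamma(t),\sigma[0,\infty))\to\infty$. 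This yields $(A',0)$-quasi-geodesic triangles that are not uniformly slim, contradicting Proposition~\ref{thinqtriangle}. Your fat-triangle heuristic is the right picture, but without the correct (direction-sensitive, regularity-free) lower bound and the explicit quasi-geodesics, the contradiction is not established.
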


Note that no assumptions regarding boundedness and regularity of the boundary are necessary. The proof relies on estimates of the minimal metric within convex domains and the construction of ``fat'' quasi-geodesic triangles near the conformal harmonic disk.

Finally, we provide results concerning the Hilbert metric $h_D$ and the minimal metric $g_D$ in convex domains. In particular, we prove (Proposition \ref{gvsh}) that in any convex domain $D \subset \mathbb{R}^d$ $(d \geq 3)$ we have
\[ h_D(x, v) \leq 2 g_D(x, v), \quad x \in D, \, v \in \mathbb{R}^d \]
and we prove that in strongly convex domains, these two metrics are bilipschitz.

The paper is organized as follows: Section 2 covers the preliminaries used throughout. In Section 3, we establish boundary estimates for the minimal metric in bounded strongly minimally convex domains. Sections 4 and 5 present the proofs of Theorem \ref{MAIN1} and Theorem \ref{MAIN2}, respectively. Finally, Section 6 explores the relationship between the minimal and Hilbert metrics in convex domains.

\medskip {\bf Acknowledgements.} 
The author would like to express his gratitude to Franc Forstneri\v c for introducing him to the theory of minimal metric and Rafael B. Andrist for the interesting and helpful discussions regarding Proposition \ref{boundaryconnected}.

\section{Preliminares}
\textbf{Notations:}\begin{itemize}
	\item Let $(e_j)_{j=1,\dots,d}$ denote the canonical basis of $\R^d$.
	\item For $x\in\R^d$ let $||x||$ denote the standard Euclidean norm of $x$.
	\item For $u,v\in\R^d$ let $\langle u,v\rangle$ denote the Euclidean scalar product of $\R^d$.
	\item Let $\D:=\{x\in\R^2: ||x||<1\}$ and $\H^d:=\{(x,y)\in\R\times\R^{d-1}:x>0\}$.
	\item If $D\subsetneq\R^d$ is a domain and $x\in\R^d$ let
	$$\delta_D(x):=\inf\left\{||x-y||:y\in\partial D\right\}.$$
	\item Let $G_2(\R^d)$ denote the Grassman manifold of 2-planes in $\R^d$.
	\item Let $a,b\in\R$, let denote $a\wedge b:=\min\{a,b\}$ and $a\vee b:=\max\{a,b\}$.
\end{itemize}

\subsection{Minimal metric}

A map $f\colon\D\to \R^d$ $(d\geq2)$ is said to be \textit{conformal} if for all $\zeta\in\D$ we have
$$||f_x(\zeta)||=||f_y(\zeta)|| \ \mbox{and} \ \langle f_x(\zeta), f_y(\zeta)\rangle=0$$
where $\zeta=(x,y)$ are the coordinates of $\D\subset\R^2$. Let $D\subset\R^d$ $(d\geq2)$  be a domain and denote by $\CH(\D, D)
$ the space of conformal harmonic maps $f\colon\D\to D$.

The \textit{minimal metric} of $D$ is the function
$$g_D(x,v)=\inf\{1/r: f\in \CH(\D,D), f(0)=x, f_x(0)=rv \}, \ \ \ x\in D, v\in\R^d.$$
It turns out that $g_D$ is a Finsler metric, i.e. $g_D$ is not negative, upper-semicontinuous on $D\times\R^d$ and absolutely homogeneous
$$g_D(x,tv)=|t|g_D(x,v), \ \ \ t\in\R.$$
We can consider the intrinsic distance
$$\rho_D(x,y)=\inf_\gamma\int_0^1g_D(\gamma(t),\dot{\gamma}(t))dt, \ \ \ x,y\in D$$
where the infimum is over all piecewise $\mathcal{C}^1$ curve $\gamma\colon[0,1]\to D$ with $\gamma(0)=x$ and $\gamma(1)=y$. The function $\rho_D$ is called \textit{minimal pseudodistance} of $D$, and in general it may not be a distance function (for example $\rho_{\R^d}$ vanishes identically). For this reason we say that a domain $D\subset\R^d$ is \textit{hyperbolic} if $\rho_D$ is a distance and \textit{complete hyperbolic} if $\rho_D$ is a complete distance (in the sense of Cauchy). See \cite{DrFor} for some characterizations of hyperbolicity and complete hyperbolicity.

The minimal metric can be characterized in the following way: it is the largest pseudometric such that for every conformal surface $M$ and conformal harmonic map $f\colon M\to D$ we have
\begin{equation}\label{kconformal}
g_D(f(z),df_z(v))\leq\kappa_M(z,v), \ \ \ \ z\in M, v\in T_zM,
\end{equation}
where $\kappa_M$ is the hyperbolic metric of $M$.

Calculating the minimal metric of a domain explicitly is a challenging task. Forstneri\v{c} and Kalaj in \cite{FK} managed to compute it in the Euclidean unit ball of $\R^d$ $(d\geq3)$, showing that it equals the \textit{Beltrami-Cayley metric}.

\begin{example}\label{exball}
For $d\geq3$, let $x\in\B^d$ and $v\in\R^d$, then 
$$g_{\B^d}(x,v)=\dfrac{(1-||x||^2)||v||^2+|\langle x, v\rangle|^2}{(1-||x||^2)^2}=\dfrac{||v||^2}{1-||x||^2}+\dfrac{|\langle x, v\rangle|^2}{(1-||x||^2)^2}.$$
\end{example}

We conclude this section by noting that if $R\colon\R^d\to\R^d$ is a rigid transformation (i.e., a composition of an orthogonal map, a dilation and a translation) then for all $D\subset\R^d$, the map $R\colon(D,\rho_D)\to (R(D),\rho_{R(D)})$ is an isometry. In other words, the minimal distance is invariant under rigid transformations. These transformations are the only ones in $\R^d$ with this property.

\subsection{Strongly minimally convex domains}\label{BSMC}

Let $U\subset\R^d$ be a open set and $u\colon U\to\R$ be a $\mathcal{C}^2$-smooth function. Given $x\in U$ and a 2-plane $\Lambda\in G_2(\R^d)$, let
$$tr_\Lambda Hess_u(x)$$
denote the trace of the restriction of the Hessian of $u$ to $\Lambda$.

\begin{definition}
 A $\mathcal{C}^2$-smooth function $u\colon U\to\R$ is \textit{minimal plurisubharmonic} (MPSH) if for all $x\in U$ and $\Lambda\in G_2(\R^d)$ we have
 \begin{equation}\label{hess}tr_\Lambda Hess_u(x)\geq0,
\end{equation} that is equivalent to say that for all $x\in U$ the smallest two eigenvalues $\lambda_1(x)$ and $\lambda_2(x)$ of $Hess_u(x)$ satisfies $\lambda_1(x)+\lambda_2(x)\geq0$.
Moreover, we say that $u$ is \textit{strongly minimal plurisubharmonic} if strong inequality holds in (\refeq{hess}).
\end{definition}

\begin{definition}
A bounded domain $D\subset\R^d$ $(d\geq3)$ with $\mathcal{C}^2$ boundary is \textit{strongly minimally convex} if it admits a $\mathcal{C}^2$ defining function, that is a function $u:U\to\R$ with $D=\{x\in U: u(x)<0\}$ and $du\neq0$ on $\partial D=\{x\in U: u(x)=0\}$, which is strongly MPSH on a neighborhood of $\partial D$.
\end{definition}

Bounded strongly minimal convex domains can be seen as analogous to strongly pseudoconvex domains in minimal metric theory.

\begin{remark}\label{rmknu}
A domain $D\subset\R^d$ $(d\geq3)$ admits a (strongly) MPSH defining function if and only if for all $\xi\in\partial D$ the interior principal curvatures $\nu_1\leq\nu_2\leq\cdots\leq\nu_{d-1}$ of $\partial D$ satisfy $\nu_1+\nu_2\geq0$ $(\nu_1+\nu_2>0)$ (see \cite[Theorem 8.1.13]{AFLbook}).
\end{remark}

The previous remark readily implies that strongly minimally convex domains have a connected boundary.

\begin{proposition}\label{boundaryconnected}
Let $D\subset\R^3$ $(d\geq3)$ be a bounded domain with $\mathcal{C}^2$ boundary and suppose that admits a MPSH defining function, then $\partial D$ is connected.
\end{proposition}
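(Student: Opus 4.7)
The plan is to argue by contradiction using a classical osculating-sphere argument: assuming $\partial D$ is disconnected, I would produce a boundary point at which the sum of the two smallest interior principal curvatures is strictly negative, contradicting Remark \ref{rmknu}.

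The first task is to convert disconnectedness of $\partial D$ into the existence of a bounded ``hole''. Since $D$ is a bounded domain with $\mathcal{C}^2$ boundary in $\R^d$, standard separation theory for compact embedded hypersurfaces gives that the connected components of $\R^d\setminus\overline{D}$ are in bijection with the connected components of $\partial D$, with exactly one of them unbounded. Hence, if $\partial D$ has at least two components, there must be at least one bounded connected component $H$ of $\R^d\setminus\overline{D}$, and $\overline H$ is compact.

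Next, I would let $\overline{B_R(c)}$ denote the unique smallest closed ball containing $\overline H$. Minimality forces $\overline H\cap\partial B_R(c)\neq\emptyset$; fix a point $p$ in this intersection. Because $H$ is open and $\overline H\subset\overline{B_R(c)}$ while $B_R(c)$ is open, necessarily $H\subset B_R(c)$, so $p\in\partial H\subset\partial D$. Observing that $x\mapsto\|x-c\|^2$ restricted to $\overline H$ attains its maximum at $p$, I would conclude that $p-c$ is normal to $\partial D$ at $p$ and points out of $\overline H$; equivalently, $N := (p-c)/R$ is simultaneously the outward unit normal to $B_R(c)$ at $p$ and the inward unit normal to $D$ at $p$ (since $H$ sits on the side of $\partial D$ opposite to $D$).

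Finally, in local coordinates near $p$ in which $T_p\partial D$ is horizontal and $N$ is vertical, the sphere $\partial B_R(c)$ appears as the graph $f_B(y)=-R+\sqrt{R^2-\|y\|^2}$, with $\mathrm{Hess}\,f_B(0)=-R^{-1}I$, while $\partial D$ is the graph of a $\mathcal{C}^2$ function $f_S$ with $f_S(0)=0$ and $\nabla f_S(0)=0$. Near $p$, $\overline H=\{h\leq f_S(y)\}$ and $\overline{B_R(c)}=\{h\leq f_B(y)\}$ (the cap at the top of the ball), so the inclusion $\overline H\subset\overline{B_R(c)}$ yields $f_S\leq f_B$ near the origin, with equality at the origin. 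Consequently $\mathrm{Hess}\,f_S(0)\leq -R^{-1}I$, and every interior principal curvature of $\partial D$ at $p$ satisfies $\nu_i(p)\leq -1/R$; in particular $\nu_1(p)+\nu_2(p)\leq -2/R<0$, which by Remark \ref{rmknu} contradicts the existence of an MPSH defining function for $D$.

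The main obstacle I anticipate is pinning down the first step self-containedly: the topological fact that a bounded domain with disconnected $\mathcal{C}^2$ boundary has at least one bounded hole in its complement. This uses the smoothness of $\partial D$ and is the point where the assistance acknowledged in the paper is most likely relevant; the remainder is the standard osculating-sphere curvature comparison, where one just has to match the sign convention under which the ``interior'' principal curvatures in Remark \ref{rmknu} are defined.
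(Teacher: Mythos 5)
Your proposal is correct and is essentially the paper's argument: the paper likewise reduces disconnectedness of $\partial D$ to the existence of a bounded complementary component (writing $D=D_1\setminus\overline{D}_2$, citing \cite{CKL}) and then finds a point of $\partial D_2$ admitting an externally tangent sphere --- it takes the point of $\partial D_2$ farthest from the origin rather than the touching point of your smallest enclosing ball --- at which all interior principal curvatures are negative, contradicting Remark \ref{rmknu}. The differences (choice of extremal point, and how the bounded hole is produced) are cosmetic.
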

\proof
By contradiction, if $\partial D$ is not connected, then we can write $D$ as $D_1\backslash \overline{D}_2$, where $D_2\subset D_1$ are two bounded domains (see, for example, \cite{CKL}). Now let $\xi$ be a point in $\partial D_2$ furthest from the origin. At $\xi$, all the interior principal curvatures of $\partial D$ are negative, contradicting Remark \ref{rmknu}.
\endproof

We conclude this section recalling the following theorem proved in \cite[Theorem 9.2]{DrFor}.

\begin{theorem}\label{complBSMC}
If  $D\subset\R^d$ $(d\geq3)$ is a strongly minimally convex domain, then it is complete hyperbolic.
\end{theorem}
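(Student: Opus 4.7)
The plan is to split the argument into two steps: hyperbolicity (so that $\rho_D$ is a genuine distance) and Cauchy completeness.

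For hyperbolicity, since $D$ is bounded, pick $R>0$ with $D\subset\B(x_0,R)$. The extremal nature of $g_D$ gives the decreasing property under inclusions, so $g_D(x,v)\geq g_{\B(x_0,R)}(x,v)$ on $D\times\R^d$. By Example \ref{exball}, the right-hand side is a non-degenerate Finsler metric integrating to a genuine distance, and hence so does $\rho_D$. For completeness it then suffices to prove the boundary blow-up
\[ \lim_{x\to\partial D}\rho_D(x,y_0)=+\infty \qquad (y_0\in D \text{ fixed}) \]
together with local comparability of $\rho_D$ to the Euclidean distance on compact subsets of $D$ (which follows by taking a small upper-bounding ball around each interior point). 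These two ingredients force every Cauchy sequence to lie in a compact subset of $D$ and hence to converge.

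The boundary estimate rests on the observation that if $u\in\mathcal{C}^2(D)$ is MPSH and $f\in\CH(\D,D)$, then $u\circ f$ is subharmonic on $\D$. Indeed, using $f_{xx}+f_{yy}=0$ together with conformality of $f$, a direct computation gives
\[ \Delta(u\circ f)(\zeta)=\|f_x(\zeta)\|^2\,tr_{\Lambda(\zeta)}Hess_u(f(\zeta))\geq 0, \]
where $\Lambda(\zeta)\in G_2(\R^d)$ is the 2-plane spanned by $f_x(\zeta)$ and $f_y(\zeta)$. Given the strongly MPSH defining function $u$ of $D$, a local bumping argument at each $\xi\in\partial D$ should produce a bounded MPSH function $\varphi_\xi\colon D\to[-1,0)$ with $-\varphi_\xi(x)\asymp\delta_D(x)$ near $\xi$. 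Applying a Hopf/Schwarz-type lemma to the subharmonic function $\varphi_\xi\circ f$ for an arbitrary $f\in\CH(\D,D)$ with $f(0)=x$ then produces a lower bound on $g_D(x,\cdot)$ that, upon integration along any curve from $y_0$ to $x$, yields $\rho_D(x,y_0)\geq \tfrac{1}{2}\log(1/\delta_D(x))-C'$, which is the desired blow-up.

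The main obstacle is the construction of the bounded MPSH barrier $\varphi_\xi$ with the sharp boundary behaviour. The defining function $u$ is only locally strongly MPSH on a neighborhood of $\partial D$, and one must patch it to a bounded globally MPSH function on $D$ without losing the estimate $-\varphi_\xi\asymp\delta_D$ near $\xi$. In the strongly pseudoconvex setting this is the classical Sibony-type construction; here the strict inequality $\nu_1+\nu_2>0$ from Remark \ref{rmknu} plays the role of strict positivity of the Levi form and should enable the analogous perturbation argument. Once the barrier is in hand, the remaining subharmonic calculus on $\D$ is routine.
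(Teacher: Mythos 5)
The paper does not prove this statement: it is quoted from \cite[Theorem 9.2]{DrFor}, and the underlying boundary estimates are exactly the ones the paper later imports as (9.7) and Corollary 9.1 of \cite{DrFor} in the proof of Theorem \ref{BSMCestimate}. Your overall architecture is the right one and matches that reference: hyperbolicity from $g_D\geq g_{B(x_0,R)}$, completeness from a logarithmic blow--up $\rho_D(x,y_0)\gtrsim\log(1/\delta_D(x))$ plus local comparability with the Euclidean distance, and the foundational fact that $u\circ f$ is subharmonic for $u$ MPSH and $f\in\CH(\D,D)$ (your Laplacian computation is correct, including at branch points). The reduction of completeness to these ingredients is also sound.

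The gap is in the step that is supposed to produce the blow--up, and it is not only the barrier construction you flag. Even granting a bounded MPSH barrier $\varphi_\xi\colon D\to[-1,0)$ with $-\varphi_\xi\asymp\delta_D$ near $\xi$, the ``Hopf/Schwarz-type lemma'' you invoke for $v=\varphi_\xi\circ f$ is false for general bounded negative subharmonic functions: unlike the harmonic case (where Harnack gives $|\nabla v(0)|\leq 2|v(0)|$), a subharmonic $v\colon\D\to[-1,0)$ with $v(0)=-\epsilon$ can satisfy $|\nabla v(0)|\gg\epsilon$. Indeed, take $v=-\epsilon/2-P$ where $P$ is the Green potential of a smooth bump of total mass $m\approx\epsilon/(2\log(1/\epsilon))$ supported near the point $(\epsilon,0)$; then $v(0)=-\epsilon$ while $|\nabla v(0)|\approx 1/(2\log(1/\epsilon))$. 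So the inequality $\|(f_x(0))_N\|\lesssim\delta_D(x)$, which is what you need to get $g_D(x,v)\gtrsim\|v_N\|/\delta_D(x)$ and hence the $\log(1/\delta_D)$ lower bound on $\rho_D(\cdot,y_0)$, does not follow from subharmonicity and boundedness alone. This is precisely why the actual arguments use additional structure: a Sibony-type weight with $\log u$ minimal plurisubharmonic yields $u\circ f(\zeta)\leq C|\zeta|^2$ and controls $\|f_x(0)\|$ via $\Delta(u\circ f)(0)$, but that route gives the isotropic estimate $g_D(x,v)\gtrsim\|v\|/\delta_D(x)^{1/2}$, which by itself only bounds $\int\|\dot\gamma\|\,\delta_D(\gamma)^{-1/2}\,dt\gtrsim|\delta_D(x)^{1/2}-\delta_D(y_0)^{1/2}|$ and does not blow up; the anisotropic normal estimate (9.7) of \cite{DrFor} requires a separate localization/comparison argument. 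Until you either prove a correct substitute for the gradient bound on $\varphi_\xi\circ f$ or establish the normal estimate by another route, the completeness claim is not proved.
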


\subsection{Gromov hyperbolic spaces}
The book \cite{BH} is a standard reference for the theory of Gromov hyperbolic spaces.

\begin{definition} 
	Let $(X,d)$ be a metric space.
	For every $x,y,o\in X$  the {\sl Gromov product} is defined as
	$$(x|y)_o:=\frac{1}{2}[d(x,o)+d(y,o)-d(x,y)].$$
Let $\delta\geq0$. The metric space $(X,d)$ is $\delta$-{\sl hyperbolic} if for all $x,y,z,o\in X$
	\begin{equation}\label{4point}
(x|y)_o\geq(x|z)_o\wedge(y|z)_o-\delta.
	\end{equation}
	Finally, a metric space is {\sl Gromov hyperbolic} if it is $\delta$-hyperbolic for some $\delta\geq0$.
\end{definition}

\begin{remark}\label{4condII}
The inequality (\refeq{4point}) is equivalent to
$$d(x,z)+d(y,w)\leq(d(x,y)+d(z,w))\vee(d(y,z)+d(x,w))+2\delta, \ \ \ \ x,y,z,w\in X.$$
\end{remark}

\begin{definition}\label{Gcomp}[Gromov boundary]
Fix a base-point $o\in X$. A sequence $(x_i)$ in $X$ \textit{converges at infinity} if $\lim_{i,j\to+\infty}(x_i|x_j)_o=+\infty$. Two sequences $(x_i)$ and $(y_i)$ converging at infinity are
called \textit{equivalent} if $\lim_{i\to+\infty}(x_i|y_i)_o =+\infty$. By (\refeq{4point}), this defines an equivalent relation. It is easy to see that these notions do not depend on the choice of $o\in X$.

The \textit{Gromov boundary} $\partial_G X$ is defined as the set of equivalence classes of sequences converging at infinity. The \textit{Gromov compactification} $\overline{X}^G:=X\sqcup\partial_GX$ can be equipped with a topology that provides a compactification of the space $X$.
\end{definition}

\begin{definition}
	Let $(X,d)$ be a metric space, $I\subset\R$ be an interval and $A\geq1$ and $B\geq0$. A function $\sigma\colon I\rightarrow X$ is
	\begin{enumerate}
		\item a {\sl geodesic}, if for each $s,t\in I$ 
		$$d(\sigma(s),\sigma(t))=|t-s|; $$
		\item a $(A,B)$-{\sl quasi-geodesic}, if for each $s,t\in I$
		$$A^{-1}|t-s|-B\leq d(\sigma(s),\sigma(t))\leq A|t-s|+B.$$
	\end{enumerate}
A $(A,B)$-\textit{quasi-geodesic triangle} consists of three points in $X$ and three $(A,B)$-quasi-geodesic segments connecting these points, known as its \textit{sides}. If $M\geq0$, a quasi-geodesic triangle is $M$-\textit{slim} if each side is contained within the $M$-neighborhood of the other two sides.
\end{definition}

Recall that a metric space $(X,d)$ is \textit{proper} if closed balls are compact, and \textit{geodesic} if any two points can be connected by a geodesic. A fundamental property of proper geodesic Gromov hyperbolic spaces is the Shadowing lemma \cite[Part III, Theorem 1.7]{BH}, which implies the following characterization of Gromov hyperbolicity.

\begin{proposition}\cite[Part III, Corollary 1.8]{BH}\label{thinqtriangle}
	A proper geodesic metric space $(X, d)$ is $\delta$-hyperbolic if and only if, for all $A\geq1$ and $B\geq0$, there exists $M:=M(A,B,\delta)>0$ such that every $(A,B)$-quasi-geodesic triangle is $M$-slim.
\end{proposition}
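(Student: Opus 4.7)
The plan is to reduce both directions to the Shadowing (Morse) Lemma quoted from \cite[Part III, Theorem 1.7]{BH}, together with the standard fact that in a $\delta$-hyperbolic space every geodesic triangle is $4\delta$-slim.

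For the forward implication, assume $(X,d)$ is $\delta$-hyperbolic and fix $A\geq 1$, $B\geq 0$. The Shadowing Lemma produces a constant $H=H(A,B,\delta)$ such that every $(A,B)$-quasi-geodesic lies within Hausdorff distance $H$ of some genuine geodesic sharing its endpoints. Given a $(A,B)$-quasi-geodesic triangle with sides $\sigma_1,\sigma_2,\sigma_3$, I would replace each $\sigma_i$ by a geodesic $\gamma_i$ with the same endpoints, at Hausdorff distance at most $H$. The geodesic triangle $\{\gamma_1,\gamma_2,\gamma_3\}$ is $4\delta$-slim, and a direct triangle inequality argument then shows each $\sigma_i$ is contained in the $(2H+4\delta)$-neighborhood of $\sigma_j\cup\sigma_k$. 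Setting $M:=2H+4\delta$ concludes this direction.

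For the converse, assume that for all $A,B$ a constant $M=M(A,B)$ as in the statement exists. Taking $A=1$, $B=0$ yields $M_0$ such that every geodesic triangle in $X$ is $M_0$-slim. It is then standard that $M_0$-slimness of geodesic triangles implies $\delta$-hyperbolicity with $\delta$ depending only on $M_0$: given four points $x,y,z,w\in X$, one joins them by geodesics into a quadrilateral, splits it into two geodesic triangles along a diagonal, and by following how a point on one side is shadowed on the other two sides (twice, once per triangle) deduces the inequality of Remark \ref{4condII} with constant $2\delta$ depending only on $M_0$.

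The main obstacle is genuinely the Shadowing Lemma, the proof of which requires the full strength of Gromov hyperbolicity; since this is invoked as a black box from \cite{BH}, the remaining content is the routine bookkeeping sketched above. A small care point is that properness of $(X,d)$ is used to guarantee the existence of geodesics between arbitrary endpoints (via Hopf--Rinow-type arguments), so that the substitution of $\sigma_i$ by $\gamma_i$ in the forward direction is legitimate.
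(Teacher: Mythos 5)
Your proof is correct and follows the same route the paper (and its cited source \cite[Part III]{BH}) indicates: both directions reduce to the Shadowing Lemma plus the standard equivalence between slim geodesic triangles and the four-point condition. The only minor inaccuracy is your closing remark: existence of geodesics is already part of the hypothesis (the space is assumed geodesic), so no Hopf--Rinow argument is needed; properness is not actually used in your argument at all.
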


Moreover, the Shadowing Lemma implies the invariance of Gromov hyperbolicity under bilipschitz maps.

\begin{corollary}\cite[Part III, Theorem 1.9]{BH}\label{Gromovinv}
Let $(X,d_X)$ and $(Y,d_Y)$ be two metric spaces, and $f\colon X\to Y$ a bijective bilipschitz map. Then $(X,d_X)$ is Gromov hyperbolic if and only if $(Y,d_Y)$ is Gromov hyperbolic. Moreover, $f$ extends to a homeomorphism between the Gromov compactifications $\overline{X}^G$ and $\overline{Y}^G$.
\end{corollary}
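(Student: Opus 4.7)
The plan is to deduce both assertions from the thin quasi-geodesic triangle characterization (Proposition~\ref{thinqtriangle}) together with the Shadowing Lemma, working in the natural setting of proper geodesic spaces. Let $L\geq 1$ denote the bilipschitz constant of $f$. The essential observation is that a bilipschitz bijection with constant $L$ sends $(A,B)$-quasi-geodesics to $(AL,BL)$-quasi-geodesics, and the same holds for its inverse.

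For the first assertion, suppose $(X,d_X)$ is $\delta$-hyperbolic. Given $A\geq 1$, $B\geq 0$ and an $(A,B)$-quasi-geodesic triangle $T$ in $Y$, its preimage $f^{-1}(T)$ is an $(AL,BL)$-quasi-geodesic triangle in $X$, hence $M$-slim with $M=M(AL,BL,\delta)$ by Proposition~\ref{thinqtriangle}. Pushing forward by $f$, which expands distances by at most $L$, shows that $T$ is $LM$-slim in $Y$. Since $A$ and $B$ were arbitrary, the converse direction of Proposition~\ref{thinqtriangle} yields that $(Y,d_Y)$ is Gromov hyperbolic. Interchanging the roles of $X$ and $Y$ gives the reverse implication.

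For the extension to the Gromov compactifications, I would fix base-points $o\in X$ and $f(o)\in Y$ and establish two-sided estimates of the form
\[
L^{-1}(x|y)_o^X - C \;\leq\; (f(x)|f(y))_{f(o)}^Y \;\leq\; L\,(x|y)_o^X + C
\]
with a constant $C=C(L,\delta)$ independent of $x,y$. These immediately imply that $(x_n)$ converges at infinity in $X$ if and only if $(f(x_n))$ does in $Y$, and likewise for the equivalence relation on such sequences, so $f$ descends to a well-defined bijection $\overline{f}\colon\overline{X}^G\to\overline{Y}^G$; the uniformity of $C$ gives continuity of both $\overline{f}$ and its inverse in the Gromov topologies.

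The main obstacle is proving the Gromov product comparison itself, since a direct term-by-term estimate from the bilipschitz inequalities alone introduces an additive error proportional to $d_X(x,y)$, which is useless near the boundary. My plan is to bypass this by using the geometric interpretation, valid up to a $\delta$-dependent constant, that $(x|y)_o$ agrees with the distance from $o$ to a geodesic $[x,y]$. The image of such a geodesic under $f$ is a quasi-geodesic in $Y$ which, by the Shadowing Lemma, fellow-travels a genuine geodesic between $f(x)$ and $f(y)$ with tracking constant depending only on $L$ and $\delta$. Comparing distances from $f(o)$ to this fellow geodesic with the corresponding distances in $X$ produces the required two-sided bound.
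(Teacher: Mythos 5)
Your argument is correct and is essentially the standard proof that the paper delegates to Bridson--Haefliger: pull back quasi-geodesic triangles through the bilipschitz map, apply the Shadowing Lemma (via Proposition~\ref{thinqtriangle}), and compare Gromov products by identifying them, up to a $\delta$-dependent constant, with distances to geodesics. Your restriction to proper geodesic spaces is not merely cosmetic---the four-point condition is not bilipschitz-invariant for general metric spaces, so some such hypothesis is genuinely needed for the statement to hold---but it covers the paper's application, where both $d_F$ and $\rho_D$ are complete proper length metrics.
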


\section{Estimates of the minimal metric in Bounded Strongly Minimally Convex domains}

We begin a well-known lemma on boundary projection.

\begin{lemma}\cite[Lemma 2.1]{BaBo}\label{lemma2.1}
Let $D\subset\R^d$ $(d\geq2)$ be a bounded domain with $\mathcal{C}^2$ boundary. Then there exists $0<\epsilon<1$ such that
\begin{enumerate}
\item for every $x\in N_\epsilon(\partial D):=\{x\in\R^d: \delta_D(x)<\epsilon\}$ there exists a unique $\pi(x)\in\partial D$ with $||x-\pi(x)||=\delta_D(x)$;
\item the \textit{signed distance} to the boundary $\rho\colon \R^d\to\R$ given by
$$\rho(x):=\begin{cases*}
	-\delta_D(x) &\text{if\ $x\in D$}\\
	\delta_D(x) &\text{if\ $x\notin D$}
	\end{cases*}$$is $\mathcal{C}^2$-smooth on $N_\epsilon(\partial D)$;
\item the fibers of the map $\pi\colon N_\epsilon(\partial D)\to \partial D$ are
$$\pi^{-1}(\xi)=\{\xi+tn_\xi: |t|<\epsilon\}$$
where $n_\xi$ is the outer unit normal vector of $\partial D$ at $\xi\in\partial D$;
\item the gradient of $\rho$ satisfies  for all $x\in N_\epsilon(\partial D)$
$$\nabla\rho(x)=n_{\pi(x)};$$
\item the projection map $\pi\colon N_\epsilon(\partial D)\to \partial D$ is $\mathcal{C}^1$-smooth. 
\end{enumerate}
\end{lemma}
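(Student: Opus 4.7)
I would prove this as a standard consequence of the tubular neighborhood theorem applied to the hypersurface $\partial D$, together with the compactness of $\partial D$. The central object is the normal exponential map
\[
\Phi\colon \partial D\times(-1,1)\to\R^d,\qquad \Phi(\xi,t)=\xi+tn_\xi,
\]
where $n_\xi$ denotes the outer unit normal to $\partial D$. Because $\partial D$ is of class $\mathcal{C}^2$, the Gauss map $\xi\mapsto n_\xi$ is $\mathcal{C}^1$, and hence so is $\Phi$. At a point $(\xi,0)$, the differential $d\Phi$ restricts to the identity on $T_\xi\partial D$ and sends $\partial_t$ to $n_\xi$; since $n_\xi$ is transverse to $T_\xi\partial D$, this differential is an isomorphism onto $\R^d$. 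By the inverse function theorem and compactness of $\partial D$, there exists $\epsilon_0>0$ such that $\Phi$ is a $\mathcal{C}^1$-diffeomorphism from $\partial D\times(-\epsilon_0,\epsilon_0)$ onto an open neighborhood of $\partial D$; shrinking if necessary I can arrange this neighborhood to coincide with $N_\epsilon(\partial D)$ for some $0<\epsilon<\epsilon_0$.

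\textbf{Uniqueness of the nearest point and fibers of $\pi$ (items (1), (3)).} Given $x=\Phi(\xi,t)\in N_\epsilon(\partial D)$, the vector $x-\xi=tn_\xi$ is orthogonal to $T_\xi\partial D$, so $\xi$ is a critical point of $y\mapsto\|x-y\|^2$ on $\partial D$. A second-order Taylor expansion near $\xi$ in normal coordinates, together with the uniform bound on the principal curvatures of $\partial D$ provided by compactness, shows that for $\epsilon$ small enough this critical point is the unique global minimum. This gives $\pi(x)=\xi$, hence $\delta_D(x)=|t|$, and simultaneously identifies the fibers of $\pi$ as the normal segments $\{\xi+tn_\xi:|t|<\epsilon\}$.

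\textbf{Smoothness of $\rho$, $\pi$, and the gradient formula (items (2), (4), (5)).} In the $(\xi,t)$-coordinates furnished by $\Phi$, one has $\rho\circ\Phi(\xi,t)=t$ and $\pi\circ\Phi(\xi,t)=\xi$; composing with the $\mathcal{C}^1$-diffeomorphism $\Phi^{-1}$ yields $\pi\in\mathcal{C}^1$ and $\rho\in\mathcal{C}^1$ on $N_\epsilon(\partial D)$. The gradient formula comes out by differentiating $\rho\circ\Phi(\xi,t)=t$ along the normal fiber: the directional derivative of $\rho$ in direction $n_\xi$ equals $1$, while its restriction to each level $\{t=\mathrm{const}\}$ is constant, so $\nabla\rho(\Phi(\xi,t))=n_\xi=n_{\pi(x)}$. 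Since $n\circ\pi$ is a $\mathcal{C}^1$ map, this upgrade gives $\rho\in\mathcal{C}^2(N_\epsilon(\partial D))$.

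\textbf{Main obstacle.} The only genuinely delicate point is promoting $\rho$ from $\mathcal{C}^1$ to $\mathcal{C}^2$, because the diffeomorphism $\Phi$ built from a $\mathcal{C}^2$ hypersurface is itself only $\mathcal{C}^1$ and so does not immediately transfer $\mathcal{C}^2$-regularity of the coordinate function $t$ to $\rho$. The formula $\nabla\rho=n\circ\pi$ circumvents this, since it expresses the gradient of $\rho$ as a $\mathcal{C}^1$ composition without invoking second derivatives of $\Phi$. Once this point is handled, the remainder of the proof is organizational: the uniform $\epsilon>0$ is obtained from compactness of $\partial D$, and the five items follow by reading off the properties of $\Phi$ in the tubular coordinates.
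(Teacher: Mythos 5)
The paper does not prove this lemma but imports it verbatim from \cite[Lemma 2.1]{BaBo}, and your argument is exactly the standard tubular-neighborhood proof underlying that reference: invert the normal map $\Phi(\xi,t)=\xi+tn_\xi$ via the inverse function theorem plus compactness, read off items (1)--(5) in the resulting coordinates, and recover $\mathcal{C}^2$-regularity of $\rho$ from the identity $\nabla\rho=n\circ\pi$ rather than from second derivatives of the only-$\mathcal{C}^1$ map $\Phi$. The proof is correct; the only step worth writing out in full is that a nearest boundary point $y$ to $x$ automatically satisfies $x=\Phi(y,\pm\delta_D(x))$, so global injectivity of $\Phi$ (not just the local Taylor expansion at $\xi$) is what forces uniqueness of the foot point in item (1).
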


Let $D\subset\R^d$ $(d\geq2)$ be a bounded domain with $\mathcal{C}^2$ boundary, and let $0<\epsilon<1$ be the constant from the previous Lemma. For $x\in N_\epsilon(\partial D)$ and $v\in\R^d$, we consider the orthogonal decomposition of $v=v_N+v_T$ at (unique) projection point $\pi(x)$, where
$$v_N:=\langle v,n_{\pi(x)}\rangle n_{\pi(x)}, \ \ \ \  v_T:= v-v_N.$$

We are now ready to state the main result of this section.

\begin{theorem}\label{BSMCestimate}
Let $D\subset\R^d$ $(d\geq3)$ be a bounded strongly minimally convex domain. Then there exists $0<\epsilon<1$ and $A\geq1$ such that
for all $x\in N_\epsilon(\partial D)\cap D$ and $v\in\R^d$, we have
$$A^{-1}\left(\frac{||v_N||}{2\delta_D(x)}+\frac{||v_T||}{\delta_D(x)^{1/2}} \right)\leq g_D(x,v)\leq A\left(\frac{||v_N||}{2\delta_D(x)}+\frac{||v_T||}{\delta_D(x)^{1/2}} \right)$$
\end{theorem}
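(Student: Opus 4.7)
The plan is to establish the upper and lower bounds separately, adapting Balogh and Bonk's strategy \cite{BaBo} to the minimal setting: for the upper bound, explicit conformal harmonic disks into $D$; for the lower bound, subharmonicity of the composition with a strongly MPSH defining function. Throughout, uniformity of constants in $\xi \in \partial D$ will follow from compactness of the boundary together with the $\mathcal{C}^2$ regularity and continuous dependence of all boundary data on $\xi$.

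For the upper bound, I would produce for each $x \in N_\epsilon(\partial D) \cap D$ and each $v \in \R^d$ a single conformal harmonic disk realizing both the normal and tangential estimates simultaneously. The key observation is that strong minimal convexity, together with $\nu_1 \leq \nu_2 \leq \cdots \leq \nu_{d-1}$, forces $\nu_2 > 0$ uniformly on $\partial D$ by Remark \ref{rmknu}. Combined with bounded principal curvatures, this yields a uniform radius $R_0 > 0$ such that at every $\xi \in \partial D$ and in every 2-plane $\Pi$ through $\xi$ containing $n_\xi$, the planar cross-section $D \cap \Pi$ admits an inscribed 2-disk tangent to $\partial D$ at $\xi$ of radius $\geq R_0$. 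For each $x$ and $v$, I would choose $\Pi := \mathrm{span}(n_{\pi(x)}, v_T/\|v_T\|)$ so that both $v_N$ and $v_T$ lie in $\Pi$, parametrize the inscribed 2-disk by an affine (hence conformal harmonic) map $\phi \colon \D \to \R^d$, and pre-compose with a M\"obius automorphism of $\D$ sending $0$ to $\phi^{-1}(x)$ to obtain $f \in \CH(\D, D)$ with $f(0) = x$. Evaluating $g_{\phi(\D)}(x, v)$ via the Beltrami-Cayley formula in Example \ref{exball} and using the monotonicity $g_D \leq g_{\phi(\D)}$ then yields the bound $g_D(x, v) \leq A\bigl(\|v_N\|/(2\delta_D(x)) + \|v_T\|/\sqrt{\delta_D(x)}\bigr)$.

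For the lower bound, I would fix a $\mathcal{C}^2$ strongly MPSH defining function $u$ on $N_\epsilon(\partial D)$; by compactness one may assume $|\nabla u| \geq c_0$ and $tr_\Lambda Hess_u \geq c_0$ uniformly. For any $f \in \CH(\D, D)$ with $f(0) = x$ and $f_x(0) = rv$, the chain rule gives $\Delta(u \circ f)(z) = 2\, tr_{df_z(T_z\D)} Hess_u(f(z)) \cdot \|f_x(z)\|^2$, so $u \circ f$ is subharmonic wherever $f(z) \in N_\epsilon(\partial D)$. After pre-composing with an automorphism of $\D$ to ensure $f(\D) \subset N_\epsilon(\partial D)$, the function $w := u \circ f$ is subharmonic and nonpositive on $\D$ with $w(0) \asymp -\delta_D(x)$. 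A Harnack-type estimate $|\nabla w(0)| \leq C|w(0)|$, together with $\nabla u(x) \approx c_0 n_{\pi(x)}$ and the chain-rule identity $\nabla w(0) = (df_0)^{*}\nabla u(x)$, controls the normal component of $f_x(0)$ and gives the normal lower bound $g_D(x, v_N) \geq C^{-1}\|v_N\|/\delta_D(x)$. For the tangential lower bound this first-order argument is vacuous since $\nabla u(x) \perp v_T$; instead I would use the quantitative bound $\Delta w(z) \geq 2c_0 \|f_x(z)\|^2$ and Green's identity on a sub-disk $\D_\rho$ to obtain $\int_{\D_\rho} \|f_x\|^2 \leq C \delta_D(x)$, followed by a Cauchy-type mean-value estimate to extract $\|f_x(0)\|^2 \leq C' \delta_D(x)$ in tangential directions.

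The main obstacle will be the tangential lower bound: it requires combining the zeroth-order information $w(0) \asymp -\delta_D(x)$ with the second-order Hessian lower bound from strong MPSH via Green's formula, while managing the localization of $f$ into $N_\epsilon(\partial D)$ and tracking all constants uniformly as $\xi$ varies over $\partial D$. A secondary subtlety is that in the upper bound construction, when the directional curvature of $\partial D$ in the chosen plane $\Pi$ is small or negative, the inscribed disk radius is controlled not by the local curvature at $\xi$ but by the global geometry of $D \cap \Pi$, which must still be handled uniformly using the compactness of $\partial D$ and the $\mathcal{C}^2$ boundary regularity.
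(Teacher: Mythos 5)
The upper bound is where your proposal genuinely breaks down. You inscribe a flat $2$-disk in the planar cross-section $D\cap\Pi$, parametrize it by an affine map $\phi$, and then propose to ``evaluate $g_{\phi(\D)}(x,v)$ via the Beltrami--Cayley formula in Example \ref{exball}''. That formula is the minimal metric of the solid round ball $\B^d$ ($d\geq3$), whose extremal conformal minimal disks are genuinely non-planar; it does not apply to the image of an affine map. Conformal harmonic maps of $\D$ into a fixed affine $2$-plane are holomorphic or antiholomorphic in the obvious identification, so the best your construction can give is the Poincar\'e metric of the planar slice $D\cap(x+\Pi)$, which is a conformal (hence isotropic) metric: at distance $\delta$ from the boundary of a planar disk of radius $R_0$ its density is $R_0/(R_0^2-|z|^2)\approx 1/(2\delta)$ in \emph{every} direction. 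Thus you obtain $g_D(x,v)\lesssim ||v||/(2\delta_D(x))$, which for small $\delta_D(x)$ is far weaker than the required tangential bound $||v_T||/\delta_D(x)^{1/2}$ --- the anisotropy that is the entire point of the estimate is lost. The repair is to use a full-dimensional interior tangent ball: for a bounded $\mathcal{C}^2$ domain there is a uniform $\epsilon>0$ with $B(o_x,\epsilon)\subset D$, $o_x=\pi(x)-\epsilon n_{\pi(x)}$, and monotonicity $g_D\leq g_{B(o_x,\epsilon)}$ together with the explicit anisotropic formula for $g_{\B^d}$ yields $g_D(x,v)\leq 2\epsilon^{-1/2}\bigl(||v_N||/(2\delta_D(x))+||v_T||/\delta_D(x)^{1/2}\bigr)$. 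Note that no minimal convexity is needed for this half of the theorem.

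Your lower bound is a from-scratch reconstruction of estimates the paper simply quotes from \cite{DrFor} (inequality (9.7) and Corollary 9.1 there): $g_D(x,v)\geq c_1||v_N||/\delta_D(x)$ and $g_D(x,v)\geq c_2||v||/\delta_D(x)^{1/2}$. The outline (subharmonicity of $u\circ f$, a gradient bound for the normal part, a Green/Riesz energy bound plus subharmonicity of $||f_x||^2$ for the tangential part) is the right shape, but two steps are not valid as stated. First, the ``Harnack-type estimate'' $|\nabla w(0)|\leq C|w(0)|$ is false for general negative subharmonic functions (e.g.\ $w=-\epsilon+\eta\log(|z-a|/(1+|a|))$ with $|a|=\eta\to0$ has $|w(0)|\to\epsilon$ while $|\nabla w(0)|=1$); one must pass to the harmonic majorant, to which Harnack applies, and separately control the gradient of the Green potential by the Riesz mass --- precisely the quantity you are trying to bound, so the circularity has to be broken carefully. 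Second, pre-composing $f$ with an automorphism of $\D$ does not shrink its image into $N_\epsilon(\partial D)$; the localization requires either a globally defined negative MPSH exhaustion of $D$ that is strongly MPSH near $\partial D$, or restricting to a component of $f^{-1}(N_\epsilon(\partial D))$. Since these estimates are already established in \cite{DrFor}, citing them is the cleaner route; if you do want a self-contained proof, these two points must be addressed.
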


To prove the theorem, we begin by examining the minimal metric of the Euclidean ball.

\begin{lemma}\label{estball} For all $x\in\B^d$, $x\neq 0$ and $v\in\R^d$ we have
$$g_{\B^d}(x,v)\leq2\left(\frac{||v_N||}{1-||x||}+\frac{||v_T||}{(1-||x||)^{1/2}}\right) .$$
\end{lemma}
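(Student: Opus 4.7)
The plan is to prove the inequality by direct computation from the explicit formula of Example \ref{exball}, after aligning the orthogonal decomposition of $v$ with the radial direction.

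First I would use that for $x\in\B^d\setminus\{0\}$ the closest boundary point is $\pi(x)=x/||x||$, and the outer unit normal there is $n_{\pi(x)}=x/||x||$. The decomposition $v=v_N+v_T$ then splits $v$ into its radial and tangential parts with respect to $x$, so
$$||v||^2=||v_N||^2+||v_T||^2, \qquad |\langle x,v\rangle|=||x||\cdot||v_N||.$$

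Next, substituting these identities into the formula of Example \ref{exball}, the numerator $(1-||x||^2)||v||^2+|\langle x,v\rangle|^2$ collapses to $(1-||x||^2)||v_T||^2+||v_N||^2$ (the two $||v_N||^2$-contributions recombine via $(1-||x||^2)+||x||^2=1$), giving
$$g_{\B^d}(x,v)^2=\frac{||v_T||^2}{1-||x||^2}+\frac{||v_N||^2}{(1-||x||^2)^2}.$$
Finally, I would apply the elementary bound $1-||x||^2=(1-||x||)(1+||x||)\geq 1-||x||$ together with the subadditivity of the square root, $\sqrt{a+b}\leq\sqrt{a}+\sqrt{b}$, to conclude
$$g_{\B^d}(x,v)\leq\frac{||v_T||}{(1-||x||)^{1/2}}+\frac{||v_N||}{1-||x||},$$
which is already stronger than the claim (the factor $2$ in the statement is slack).

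There is no serious obstacle here: the proof is a short calculation from the closed-form Beltrami--Cayley expression, and the only observation needed is that on the ball the projection map is radial, which makes the cross term in the numerator combine perfectly with one of the $||v_N||^2$ contributions. The extra slack in the stated constant is presumably convenient for the comparison arguments leading to the strongly minimally convex case in Theorem \ref{BSMCestimate}, where only the correct scaling of $||v_N||$ and $||v_T||$ in $\delta_D$ matters.
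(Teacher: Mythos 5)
Your proposal is correct and follows essentially the same route as the paper: both start from the closed-form expression in Example \ref{exball}, use that on the ball the normal direction at $\pi(x)$ is radial so $|\langle x,v\rangle|=\|x\|\,\|v_N\|$, and finish with subadditivity of the square root and $1-\|x\|^2\geq 1-\|x\|$. Your extra step of recombining the two $\|v_N\|^2$ contributions via $(1-\|x\|^2)+\|x\|^2=1$ yields a marginally sharper constant than the paper's term-by-term estimate, but this is a cosmetic refinement rather than a different argument.
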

\proof
By the explicit formula in Example \ref{exball} we have
\begin{align*}g_{\B^d}(x,v)&=\left(\frac{||v||^2}{1-||x||^2}+\frac{|\langle v, x\rangle|^2}{(1-||x||^2)^2}\right)^\frac{1}{2}\\&=\left(\frac{||v_N||^2+||v_T||^2}{1-||x||^2}+\frac{||x||^2||v_N||^2}{(1-||x||^2)^2}\right)^\frac{1}{2}\\&\leq \frac{||v_N||+||v_T||}{\sqrt{1-||x||^2}}+\frac{||v_N||}{1-||x||^2}\\&\leq\frac{||v_T||}{\sqrt{1-||x||^2}}+2\frac{||v_N||}{1-||x||^2}\\&=2\left(\frac{||v_N||}{2(1-||x||)}+\frac{||v_T||}{(1-||x||)^{1/2}}\right).
\end{align*}
\endproof

\begin{proof}[Proof of Theorem \ref{BSMCestimate}]
\textbf{Upper bound:} Let $x\in N_\epsilon(\partial D)\cap D$ and consider $o_x:=\pi(x)-\epsilon n_{\pi(x)}.$ Notice that $B(o_x,\epsilon)\subset D$. Using Lemma \ref{estball} and noticing that $||x-o_x||=\epsilon-\delta_D(x)$, we have \begin{align*}g_D(x,v)&\leq g_{B(o_x,\epsilon)}(x,v)\\&=\epsilon^{-1}g_{\B^d}(\epsilon^{-1}(x-o_x),v)\\&\leq 2\epsilon^{-1}\left(\epsilon\frac{||v_N||}{2\delta_D(x)}+\epsilon^{1/2}\frac{||v_T||}{\delta_D(x)^{1/2}}\right)\\&\leq 2\epsilon^{-1/2}\left(\frac{||v_N||}{2\delta_D(x)}+\frac{||v_T||}{\delta_D(x)^{1/2}}\right).\end{align*}
\textbf{Lower Bound:}
By (9.7) in \cite{DrFor}, there exists $c_1>0$ such that
$$g_D(x,v)\geq c_1\frac{||v_N||}{\delta_D(x)}.$$
Moreover, by \cite[Corollary 9.1]{DrFor}, there exists $c_2>0$ such that
$$g_D(x,v)\geq c_2\frac{||v||}{\delta_D(x)^{1/2}}\geq c_2\frac{||v_T||}{\delta_D(x)^{1/2}},$$
so
$$g_D(x,v)\geq c\left(\frac{||v_N||}{2\delta_D(x)}+\frac{||v_T||}{\delta_D(x)^{1/2}} \right),$$
where $c=\min\{c_1,c_2\}/2$.
\end{proof}

\section{The Gromov hyperbolicity of Bounded Strongly Minimally Convex domains}

In this section, we will prove the Gromov hyperbolicity of bounded strongly minimally convex domains with respect to the minimal metric (Theorem \ref{MAIN1}). We start with following auxiliary result.

\begin{lemma}\label{gammavsalpha}
Let $D\subset\R^d$ $(d\geq2)$ be a bounded domain with $\mathcal{C}^2$ boundary. Then there exists $0<\epsilon<1$ such that if $\gamma\colon [0,1]\to \overline{N_\epsilon(\partial D)}$ is a $\mathcal{C}^1$-smooth curve and $\alpha:=\pi\circ\gamma\colon [0,1]\to\partial D$ its projection to the boundary, then for all $t\in[0,1]$
$$\frac{1}{2}||\dot{\gamma}_T(t)||\leq||\dot{\alpha}(t)||\leq 2||\dot{\gamma}_T(t)||.$$
\end{lemma}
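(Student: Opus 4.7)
The plan is to use the orthogonal-fiber parametrization of the tubular neighborhood $N_\epsilon(\partial D)$ furnished by Lemma \ref{lemma2.1}, differentiate, and read off the normal/tangential decomposition at $\pi(\gamma(t))$ directly from the resulting formula.

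First, from Lemma \ref{lemma2.1}(3) each point $x\in\overline{N_\epsilon(\partial D)}$ can be written uniquely as $x=\pi(x)+\rho(x)\,n_{\pi(x)}$, where $\rho$ is the signed distance and $|\rho(x)|\le\epsilon$. Applying this to $x=\gamma(t)$, I get the decomposition
\[
\gamma(t)=\alpha(t)+s(t)\,n_{\alpha(t)},\qquad s(t):=\rho(\gamma(t)),\qquad |s(t)|\le\epsilon,
\]
with $\alpha,s\in\mathcal{C}^1$ by parts (2) and (5) of Lemma \ref{lemma2.1}. Differentiating in $t$,
\[
\dot\gamma(t)=\dot\alpha(t)+\dot s(t)\,n_{\alpha(t)}+s(t)\,\tfrac{d}{dt}n_{\alpha(t)}.
\]
Since $\alpha$ takes values in $\partial D$, the vector $\dot\alpha(t)$ lies in $T_{\alpha(t)}\partial D$ and is therefore orthogonal to $n_{\alpha(t)}$; since $\|n_{\alpha(t)}\|\equiv1$, the vector $\tfrac{d}{dt}n_{\alpha(t)}=dn_{\alpha(t)}(\dot\alpha(t))$ is likewise orthogonal to $n_{\alpha(t)}$. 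Hence the orthogonal decomposition of $\dot\gamma(t)$ at the projection point $\alpha(t)=\pi(\gamma(t))$ is exactly
\[
\dot\gamma_N(t)=\dot s(t)\,n_{\alpha(t)},\qquad \dot\gamma_T(t)=\bigl(I+s(t)\,dn_{\alpha(t)}\bigr)\dot\alpha(t),
\]
where $dn$ is regarded as an endomorphism of $T_{\alpha(t)}\partial D$.

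Next I use the fact that $dn_\xi$ is the Weingarten map of $\partial D$ at $\xi$: since $D$ is bounded and $\partial D$ is $\mathcal{C}^2$, $\partial D$ is compact and the operator norms $\|dn_\xi\|$ are uniformly bounded by a constant $C=C(D)$. I now shrink $\epsilon$, if necessary, so that $\epsilon\,C\le\tfrac12$. Then for every $t$,
\[
\bigl\|\,I+s(t)\,dn_{\alpha(t)}-I\,\bigr\|\le\epsilon\,C\le\tfrac12,
\]
so $I+s(t)\,dn_{\alpha(t)}$ is invertible on $T_{\alpha(t)}\partial D$ with operator norm and inverse-operator norm bounded by $3/2$ and $2$ respectively. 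Applying this to $\dot\alpha(t)$ yields
\[
\tfrac12\,\|\dot\alpha(t)\|\le\|\dot\gamma_T(t)\|\le\tfrac32\,\|\dot\alpha(t)\|,
\]
which already implies the required double inequality $\tfrac12\|\dot\gamma_T(t)\|\le\|\dot\alpha(t)\|\le 2\|\dot\gamma_T(t)\|$.

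There is no real obstacle here; the only point that requires care is identifying the tangential component of $\dot\gamma$ correctly. The temptation is to overlook the term $s(t)\,\tfrac{d}{dt}n_{\alpha(t)}$, but this term is precisely what introduces the shape operator and governs the distortion between $\dot\alpha$ and $\dot\gamma_T$. Recognizing that this term is tangential (so contributes only to $\dot\gamma_T$) and controlling it via compactness of $\partial D$ is the heart of the argument.
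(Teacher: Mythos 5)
Your proof is correct and follows essentially the same route as the paper: both differentiate the fiber decomposition $\gamma(t)=\alpha(t)+\rho(\gamma(t))\,n_{\alpha(t)}$ and absorb the correction term by shrinking $\epsilon$, the only difference being that you identify the paper's unexplained matrix $M(t)$ explicitly as the Weingarten map $dn_{\alpha(t)}$ and verify that it acts tangentially. This makes the argument slightly more transparent (and your use of the signed distance is the right way to handle points of $\overline{N_\epsilon(\partial D)}$ on either side of $\partial D$), but it is the same proof.
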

\proof
Let $0<\epsilon_0<1$ be the constant of Lemma \ref{lemma2.1}.
Now, let $\gamma\colon [0,1]\to N_\epsilon(\partial D)$ be a generic $\mathcal{C}^1$-smooth curve with $\epsilon\leq\epsilon_0$ to determine. 
We have 
$$\gamma(t)-\alpha(t)=-\delta_D(\gamma(t))n_{\alpha(t)}.$$
Differentiating we obtain
$$\dot{\gamma}(t)-\dot{\alpha}(t)=\delta_D(\gamma(t))M(t)\dot{\alpha}(t)+\langle a(t),\dot{\gamma}(t)\rangle n_{\alpha(t)}$$
where $M$ is real ($d\times d$)-matrix valued functions and $a$ is a vector valued function.
The matrix $M$  can be expressed in terms of the first and second derivatives of $\rho$ evaluated at
points in  $N_{\epsilon_0}(\partial D)$, ensuring that $||M||$ is uniformly bounded. Considering the tangential component at  $\alpha(t)$, there exists $C>0$ such that
$$||\dot{\gamma}_T(t)-\dot{\alpha}(t)||\leq C\delta_D(\gamma(t))||\dot{\alpha}(t)||$$
which implies 
$$||\dot{\gamma}_T(t)||\leq (1+C\delta_D(\gamma(t)))||\dot{\alpha}(t)||$$
and
$$ (1-C\delta_D(\gamma(t)))||\dot{\alpha}(t)||\leq||\dot{\gamma}_T(t)||.$$
We obtain the desired estimates by setting $\epsilon:=\min\{\epsilon_0,C^{-1}\}/2$.
\endproof

Let $D\subset\R^d$ $(d\geq3)$ be a bounded strongly minimally convex domain. For the rest of the section, we denote $\overline{N_\epsilon(\partial D)}\cap D$ by $N$, where $0<\epsilon<1$ is the constant from the previous Lemma.

Firstly, we recall that by Proposition \ref{boundaryconnected}, the boundary $\partial D$ is connected. Therefore, we can consider the intrinsic distance on the boundary 
 $H\colon \partial D\times \partial D\to [0,+\infty)$ defined as
$$H(\xi,\eta):=\inf\left\{\int_0^1||\dot{\alpha}(t)||dt:\alpha\colon [0,1]\to \partial D, \mbox{piecewise} \ \mathcal{C}^1, \alpha(0)=\xi, \alpha(1)=\eta\right\}$$
and $d_H\colon D\times D\to[0,+\infty)$ given by
\begin{equation}\label{fillingmetric}
d_H(x,y)=2\log\left(\frac{H(\pi(x),\pi(y))+h(x)\vee h(y)}{\sqrt{h(x)h(y)}}\right),
\end{equation}
where $h(x):=\delta_D(x)^{1/2}$ and $\pi\colon D\to \partial D$ is a function such that $||x-\pi(x)||=\delta_D(x)$. The function $\pi(x)$ is uniquely determined only in a tubular neighborhood of $\partial D$ (Lemma \ref{lemma2.1}), but the choice of $\pi$ will not affect the final result. The function $d_H$ turns out to be a distance function on $D$ (see \cite[Lemma 7.1]{BS}).

Now, we define the function $F\colon D\times\R^d\to [0,+\infty)$ as follows
$$F(x,v)=\begin{cases*}
\dfrac{||v_N||}{2\delta_D(x)}+\dfrac{||v_T||}{\delta_D(x)^{1/2}} &\text{if \ $x\in N, v\in\R^d$} \\
c||v|| &\text{if \ $x\in D\backslash N, v\in\R^d$}
\end{cases*}$$
where $c>0$ is chosen so that $F$ is upper-semicontinuous. It is straightforward to see that 
$F$ defines a Finsler metric.

If $\gamma\colon [0,1]\to D$ is a piecewise $\mathcal{C}^1$ curve, we define
$$\ell_F(\gamma):=\int_0^1F(\gamma(t),\dot{\gamma}(t))dt$$
the length of $\gamma$ with respect to the Finsler metric $F$.
Finally, we define $d_F\colon D\times D\to[0,+\infty)$ as the intrinsic distance
$$d_F(x,y): =\inf_\gamma\ell_F(\gamma), \ \ \ x,y\in D,$$
where the infimum is over all piecewise $\mathcal{C}^1$ curve $\gamma\colon[0,1]\to D$ with $\gamma(0)=x$ and $\gamma(1)=y$. Note that Theorem \ref{BSMCestimate} implies that $d_F$ and $\rho_D$ are bilipschitz, meaning there exists $A\geq1$ such that for all $x,y\in D$
\begin{equation}\label{bilip}
A^{-1}d_F(x,y)\leq\rho_D(x,y)\leq Ad_F(x,y).
\end{equation}
In particular, by Theorem \ref{complBSMC} $(D,d_F)$ is a complete metric space.

\begin{lemma}\label{curesti1}
Let $\gamma\colon [0,1]\to N$ be a piecewise $\mathcal{C}^1$ curve with endpoints $x,y\in N$, then
$$\ell_F(\gamma)\geq \left|\log\frac{h(y)}{h(x)}\right|+\frac{1}{2}\frac{H(\pi(x),\pi(y))}{h_\gamma},$$
where $h_\gamma:=\max_{t\in[0,1]}h(\gamma(t))$.
Moreover, if $\gamma$ is a straight line contained in $\pi^{-1}(\xi)$ with $\xi\in\partial D$, then
$$\ell_F(\gamma)=\left|\log\frac{h(y)}{h(x)}\right|.$$
\end{lemma}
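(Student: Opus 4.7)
The plan is to split the integral defining $\ell_F(\gamma)$ into its normal and tangential contributions, recognize the first as the integral of a logarithmic derivative of $h\circ\gamma$, and control the second via Lemma \ref{gammavsalpha} together with the definition of $H$.

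For the normal part, I would use Lemma \ref{lemma2.1}: since $h(\gamma(t))^{2}=\delta_{D}(\gamma(t))=-\rho(\gamma(t))$ and $\nabla\rho=n_{\pi(\cdot)}$, differentiating along $\gamma$ yields
\begin{equation*}
2h(\gamma(t))\,\tfrac{d}{dt}h(\gamma(t))=-\langle n_{\pi(\gamma(t))},\dot\gamma(t)\rangle,
\end{equation*}
whose absolute value is exactly $||\dot\gamma_{N}(t)||$ because $\dot\gamma_{N}(t)$ is the projection of $\dot\gamma(t)$ onto the unit vector $n_{\pi(\gamma(t))}$. Dividing by $2\delta_{D}(\gamma(t))=2h(\gamma(t))^{2}$ identifies the normal integrand with $\bigl|\tfrac{d}{dt}\log h(\gamma(t))\bigr|$, so the triangle inequality will give
\begin{equation*}
\int_{0}^{1}\frac{||\dot\gamma_{N}(t)||}{2\delta_{D}(\gamma(t))}\,dt\;\ge\;\Bigl|\log\frac{h(y)}{h(x)}\Bigr|.
\end{equation*}

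For the tangential part, I would apply Lemma \ref{gammavsalpha} to $\alpha:=\pi\circ\gamma$ to obtain $||\dot\gamma_{T}(t)||\ge\tfrac{1}{2}||\dot\alpha(t)||$. Bounding $h(\gamma(t))\le h_{\gamma}$ uniformly and using the definition of $H$ as the infimum of lengths of piecewise $\mathcal{C}^{1}$ curves in $\partial D$ joining $\pi(x)$ to $\pi(y)$ then yields
\begin{equation*}
\int_{0}^{1}\frac{||\dot\gamma_{T}(t)||}{h(\gamma(t))}\,dt\;\ge\;\frac{1}{2h_{\gamma}}\int_{0}^{1}||\dot\alpha(t)||\,dt\;\ge\;\frac{H(\pi(x),\pi(y))}{2h_{\gamma}}.
\end{equation*}
Summing the two estimates produces the first inequality of the lemma.

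For the \emph{moreover} part, I would observe that any segment contained in $\pi^{-1}(\xi)=\{\xi+tn_{\xi}:|t|<\epsilon\}$ has velocity parallel to $n_{\xi}$, hence $\dot\gamma_{T}\equiv 0$; along such a segment $\delta_{D}(\gamma(t))$ depends affinely on $t$, so $h\circ\gamma$ is monotone and the triangle inequality applied to $\tfrac{d}{dt}\log h(\gamma(t))$ becomes an equality, yielding $\ell_{F}(\gamma)=|\log(h(y)/h(x))|$. The only subtle point in the whole argument is recognising the normal integrand as a logarithmic derivative; once this is done the remaining steps reduce to the triangle inequality and the definition of $H$.
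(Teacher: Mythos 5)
Your proposal is correct and follows essentially the same route as the paper: identify the normal integrand with $\bigl|\tfrac{d}{dt}\log h(\gamma(t))\bigr|$ (the paper states the equivalent identity for $\log\delta_D(\gamma(t))$ without proof, which you justify via $\nabla\rho=n_{\pi(\cdot)}$), bound the tangential integrand from below using Lemma \ref{gammavsalpha} and $h(\gamma(t))\le h_\gamma$, and note that $\dot\gamma_T\equiv0$ with $h\circ\gamma$ monotone in the straight-line case. No gaps.
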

\proof
First of all, for those $t\in[0,1]$ for which $\dot\gamma(t)$ exists, $$\left|\frac{d}{dt}\log(\delta_D(\gamma(t)))\right|_{t=t_0}=\frac{||(\dot{\gamma}_N)(t_0)||}{\delta_D(\gamma(t_0))}.$$
So by Lemma \ref{gammavsalpha},
\begin{align*}\ell_F(\gamma)&= \int_0^1\frac{||(\dot{\gamma}(t))_N||}{2\delta_D(\gamma(t))}+\frac{||(\dot{\gamma}(t))_T||}{\delta_D(\gamma(t))^{1/2}}dt
\\&\geq\frac{1}{2}\left|\log\frac{\delta_D(y)}{\delta_D(x)}\right|+\frac{1}{2}\int_0^1\frac{||\dot{\alpha}(t)||}{\delta_D(\gamma(t))^{1/2}}dt\\&\geq\left|\log\frac{h(y)}{h(x)}\right|+\frac{1}{2}\frac{H(\pi(x),H(y))}{h_\gamma}.
\end{align*}
Finally, if $\gamma$ is a straight line, then $\dot{\gamma}_T\equiv0$, so
$$\ell_F(\gamma)= \int_0^1\frac{|(\dot{\gamma}(t))_N|}{2\delta_D(\gamma(t))}dt=\frac{1}{2}\left|\log\frac{\delta_D(y)}{\delta_D(x)}\right|=\left|\log\frac{h(y)}{h(x)}\right|.$$
\endproof

We can now prove the main result of this section.

\begin{theorem}\label{Bquasiisome}
Let $D\subset\R^d$ $(d\geq3)$ be a bounded domain with $\mathcal{C}^2$ boundary, then there exists $B\geq0$ such that for all $x,y\in D$
\begin{equation}\label{eqBesti}
d_H(x,y)-B\leq d_F(x,y)\leq d_H(x,y)+B.
\end{equation}
\end{theorem}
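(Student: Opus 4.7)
The plan is to prove the two inequalities comprising \mref{eqBesti} separately: an upper bound $d_F \leq d_H + B$ via an explicit three-segment curve, and a lower bound $d_H \leq d_F + B$ via the length estimates of Lemma \ref{curesti1}.

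For the upper bound, given $x, y \in D$, I set $m := h(x) \vee h(y)$, $H := H(\pi(x), \pi(y))$, and choose a scale $s$ of order $\min(H + m,\, \sqrt{\epsilon}/2)$. I then build a curve $\gamma = \gamma_1 \cup \gamma_2 \cup \gamma_3$ where $\gamma_1$ and $\gamma_3$ are the normal segments from $x$ and to $y$ going to/from depth $s^2$, and $\gamma_2$ is a curve at constant depth $s^2$ whose projection to $\partial D$ is a nearly length-minimizing path from $\pi(x)$ to $\pi(y)$ (lifted via $\alpha \mapsto \alpha - s^2 n_\alpha$). By the straight-line identity in Lemma \ref{curesti1}, $\ell_F(\gamma_1) + \ell_F(\gamma_3) = \log\bigl(s^2/(h(x) h(y))\bigr)$. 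By Lemma \ref{gammavsalpha}, the tangential velocity of $\gamma_2$ is comparable to that of its boundary projection, so $\ell_F(\gamma_2) \leq C H/s$. Summing and inserting the choice of $s$ gives $\ell_F(\gamma) \leq d_H(x, y) + B$.

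For the lower bound, fix any piecewise $\mathcal{C}^1$ curve $\gamma$ from $x$ to $y$ and set $h_\gamma := \max_t h(\gamma(t))$. Splitting $\gamma$ at a point of maximal depth and applying the argument of Lemma \ref{curesti1} to each half (bounding the normal contribution using $\int_0^{t_*} |d\delta_D(\gamma)/\delta_D(\gamma)| \geq \log(h_\gamma^2/h(x)^2)$ and similarly on the other half) sharpens the lemma to
$$\ell_F(\gamma) \geq \log\bigl(h_\gamma^2/(h(x) h(y))\bigr) + H/(2 h_\gamma).$$
Writing $u := h_\gamma/(H + m)$, the constraint $h_\gamma \geq m$ forces $u \geq m/(H+m)$, and a direct calculus check shows that on this domain the function $2\log u + \tfrac{1}{2u} \cdot H/(H+m)$ is bounded below by an absolute constant. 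This gives precisely $\ell_F(\gamma) \geq d_H(x,y) - B$.

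The main obstacle is that both Lemma \ref{curesti1} and Lemma \ref{gammavsalpha} require curves to lie in the tubular neighborhood $N$, whereas neither the endpoints $x, y$ nor a general competitor curve need stay in $N$. This is handled by compactness: since $D$ is bounded, $H$ is bounded on $\partial D$ and $F$ is comparable to the Euclidean metric on $\overline{D \setminus N}$, so both $d_F$ and $d_H$ are uniformly bounded on pairs in $D \setminus N$. For the upper bound one first connects $x$ or $y$ lying in $D \setminus N$ to a point of $\partial N$ at bounded cost and then runs the three-segment construction; for the lower bound one splits $\gamma$ at its first exit and last entry into $N$, applies the in-$N$ estimate to the end sub-arcs, and absorbs the middle portion together with $d_H(x',y')$ (for $x',y' \in \partial N$) into the additive constant via the triangle inequality for $d_H$.
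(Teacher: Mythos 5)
Your proposal is correct and follows essentially the same route as the paper: the upper bound via an ``up--over--down'' curve built from normal segments and a constant-depth lift of a near-geodesic on $\partial D$ (using Lemmas \ref{gammavsalpha} and \ref{curesti1}), and the lower bound by splitting an arbitrary competitor at its point of maximal depth and minimizing the resulting function of $h_\gamma$ by calculus, with the portions of $D$ and of the curve outside $N$ absorbed into the additive constant by compactness. The only difference is organizational: you unify the paper's Cases 3 and 4 by taking the single scale $s\sim\min(H+m,\sqrt{\epsilon}/2)$ and one constrained minimization in $u=h_\gamma/(H+m)$, where the paper runs an explicit case analysis on whether $h(x)\vee h(y)$ exceeds $H(\xi,\eta)$.
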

\proof 

In order to prove the two estimates, we consider various cases depending on the relative position of $x$ and $y$. To simplify the notation, we denote with $B$ and $B'$ all the additive constants that can vary case by case.\medskip 

\textbf{Case 1:} $x,y\in D\backslash N$.

Since $D\backslash N$ is relative compact in $D$, both functions $d_H$ and $d_F$ are bounded in $D\backslash N\times D\backslash N$ so (\refeq{eqBesti}) is trivial.\medskip

\textbf{Case 2:} $x\in N$, $y\in D\backslash N$ or $x\in D\backslash N$, $y\in N$.

We may assume $x\in N$ and $y\in D\backslash N$. From the definition of $d_H$, it follows that there exists $B>0$ such that
$$\log\left(\frac{1}{h(x)}\right)-B\leq d_H(x,y)\leq \log\left(\frac{1}{h(x)}\right)+ B.$$
Set $x'=\pi(x)+\epsilon n_{\pi(x)}$. Then
$$d_F(x,y)\leq d_F(x,x')+d_F(x',y).$$
By Lemma \ref{curesti1} $d_F(x,x')=\log\left(\frac{\epsilon^{1/2}}{h(x)}\right)$. Since $x'\in \overline{D\backslash N}\subset\subset D$, we can find $B'>0$ independent of $x'$ and $y$ such that $d_F(x',y)\leq B'$, so we obtain the upper bound 
$$d_F(x,y)\leq \log\left(\frac{1}{h(x)}\right)+\frac{1}{2}\log\epsilon+ B'.$$
For the lower bound, let $\gamma\colon [0,1]\to D$ be a piecewise $\mathcal{C}^1$ curve starting from $x$ with endpoint $y$ such that $\ell_F(\gamma)\leq d_F(x,y)+1$. Let 
$$t^*:=\inf\{t\in[0,T]:\delta_D(\gamma(t))=\epsilon\}.$$
By Lemma \ref{curesti1} we obtain $$d_F(x,y)\geq\ell_F(\gamma)-1\geq\ell_F(\gamma|_{[0,t^*]})-1\geq\log\left(\frac{\epsilon^{1/2}}{h(x)}\right)-1=\log\left(\frac{1}{h(x)}\right)+\frac{1}{2}\log\epsilon-1.$$

Now set $\xi:=\pi(x)$ and $\eta:=\pi(y)$.\medskip

\textbf{Case 3:} $x,y\in N, h(x)\vee h(y)\geq H(\xi,\eta)$.

We may assume $h(y)\geq h(x)$. From the definition of $d_H$, there exists $B>0$ such that
$$\log\left(\frac{h(y)}{h(x)}\right)\leq d_H(x,y)\leq\log\left(\frac{h(y)}{h(x)}\right)+ B.$$
By Lemma \ref{curesti1} we have the lower  bound $$d_F(x,y)\geq \log\left(\frac{h(y)}{h(x)}\right).$$

For the upper bound, consider $x':=\xi+h^2(y)n_\xi$. Notice that $\delta_D(x')=\delta_D(y)$. Let $\alpha\colon [0,1]\to \partial D$ be a geodesic with respect to $H$ connecting $\xi$ with $\eta$, and define $\gamma(t):=\alpha(t)+h(y)^2n_{\alpha(t)}$. Notice that $\gamma$ is a curve in $D$ connecting $x'$ to $y$ such that $\delta_D(\gamma(t))\equiv \delta_D(y)$. Clearly $\dot{\gamma}(t)_N\equiv0$, so by Lemma \ref{gammavsalpha}
$$d_F(x',y)\leq \ell_F(\gamma)=\int_0^1\frac{||\dot{\gamma}_T(t)||}{h(y)}dt\leq \frac{2}{h(y)}\int_0^1||\dot{\alpha}(t)||dt=2\frac{H(\xi,\eta)}{h(y)}\leq 2.$$ Finally, using again Lemma \ref{curesti1} we have $$d_F(x,y)\leq d_F(x,x')+d_F(x',y)\leq\log\left(\frac{h(y)}{h(x)}\right)+2.$$

\textbf{Case 4:}  $x,y\in N, h(x)\vee h(y)\leq H(\xi,\eta)$.

Assume again $h(y)\geq h(x)$. Then
$$2\log\left(\frac{H(\xi,\eta)}{\sqrt{h(x)h(y)}}\right)\leq d_H(x,y)\leq2\log\left(\frac{H(\xi,\eta)}{\sqrt{h(x)h(y)}}\right)+ 2\log2.$$
Set $\hat{H}(\xi,\eta):=\min\{H(\xi,\eta),\epsilon^{1/2}\}$. Notice that the function $H(\cdot,\cdot)$ is bounded from above, so we can find $A>1$, independent of $\xi$ and $\eta$, such that $\hat{H}(\xi,\eta)\leq H(\xi,\eta)\leq A\hat{H}(\xi,\eta)$. Consider the points $x':=\xi+\hat{H}(\xi,\eta)^2n_\xi$ and $y':=\eta+\hat{H}(\xi,\eta)^2n_\eta$, then reasoning as in Case 3, we have
$$d_F(x',y')\leq 2\frac{H(\xi,\eta)}{\hat{H}(\xi,\eta)}\leq 2A.$$ Finally, by Lemma \ref{curesti1} we have
\begin{align*}d_F(x,y)&\leq d_F(x,x')+d_F(x',y')+d_F(y',y)\\&\leq\log\left(\frac{\hat{H}(\xi,\eta)}{h(x)}\right)+2A+\log\left(\frac{\hat{H}(\xi,\eta)}{h(x)}\right)\\&= 2\log\left(\frac{H(\xi,\eta)}{\sqrt{h(x)h(y)}}\right)+ 2A,
\end{align*}
that is the upper bound.

For lower bound, let $\gamma\colon [0,1]\to D$ be a piecewise $\mathcal{C}^1$ curve joining $x$ and $y$. Define $h_\gamma:=\max_{t\in[0,1]}h(\gamma(t))$. In order to estimate $\ell_F(\gamma)$ from below, we need to consider two sub-cases:\medskip

\textbf{Case 4.1:} $h_\gamma\geq\hat{H}(\xi,\eta)$.

Define $t_1:=\inf\{t\in[0,1]:h(\gamma(t))=\hat{H}(\xi,\eta)\}$ and $t_2:=\sup\{t\in[0,1]:h(\gamma(t))=\hat{H}(\xi,\eta)\}$. By Lemma \ref{curesti1} \begin{align*}\ell_F(\gamma)&\geq\ell_F(\gamma|_{[0,t_1]})+\ell_F(\gamma|_{[t_2,1]})\\&\geq \log\left(\frac{\hat{H}(\xi,\eta)}{h(x)}\right)+\log\left(\frac{\hat{H}(\xi,\eta)}{h(y)}\right)\\&\geq2\log\left(\frac{\hat{H}(\xi,\eta)}{\sqrt{h(x)h(y)}}\right)\\&\geq2\log\left(\frac{H(\xi,\eta)}{\sqrt{h(x)h(y)}}\right)-2\log A.\end{align*}\medskip

\textbf{Case 4.2:} $h_\gamma<\hat{H}(\xi,\eta)$.

Let $t_0\in[0,1]$ such that $h(\gamma(t_0))=h_\gamma$. Then by Lemma \ref{curesti1} 
\begin{align*}\ell_F(\gamma)&=\ell_F(\gamma|_{[0,t_0]})+\ell_F(\gamma|_{[t_0,1]})\\&\geq \log\left(\frac{h_\gamma}{h(x)}\right)+\frac{1}{2}\frac{H(\xi,\pi(\gamma(t_0)))}{h_\gamma}+\log\left(\frac{h_\gamma}{h(y)}\right)+\frac{1}{2}\frac{H(\pi(\gamma(t_0)),\eta)}{h_\gamma}\\&\geq2\log\left(\frac{h_\gamma}{\sqrt{h(x)h(y)}}\right)+\frac{1}{2}\frac{H(\xi,\eta)}{h_\gamma}.
\end{align*}
Now consider the function $f\colon (0,+\infty)\to \R$ given by $$f(u):=2\log\left(\frac{u}{\sqrt{h(x)h(y)}}\right)+\frac{1}{2}\frac{H(\xi,\eta)}{u}.$$ A simple computation shows that $f$ has a global minimum at $u=H(\xi,\eta)/4$, so
$$\ell_F(\gamma)\geq 2\log\left(\frac{H(\xi,\eta)}{\sqrt{h(x)h(y)}}\right)-4\log2+2.$$
So in both cases if we take the infimum over all piecewise $\mathcal{C}^1$ curves connecting $x$ with $y$ we obtain the lower bound
$$d_F(x,y)\geq 2\log\left(\frac{H(\xi,\eta)}{\sqrt{h(x)h(y)}}\right)-B',$$
where $B'=\max\{2\log A,4\log2-2\}$.

We considered all the possibilities for $x$ and $y$, so we proved (\refeq{eqBesti}).
\endproof

\begin{proposition}\label{FGromov}
The metric space $(D, d_F)$ is Gromov hyperbolic and the identity map $D\to D$ extends as a homeomorphism between the Gromov compactification $\overline{D}^G$ of $(D,d_F)$ and $\overline{D}$.
\end{proposition}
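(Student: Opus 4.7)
The plan is to reduce the Gromov hyperbolicity of $(D,d_F)$ to that of the model distance $(D,d_H)$, whose formula (\ref{fillingmetric}) is precisely the Bonk--Schramm hyperbolic filling over $(\partial D,H)$ with height function $h=\delta_D^{1/2}$. First I would show that $(D,d_H)$ is Gromov hyperbolic, either by citing the corresponding result from \cite{BS} or by checking the four-point condition of Remark \ref{4condII} directly. For the direct verification, setting $a_{ij}:=H(\pi(x_i),\pi(x_j))+h(x_i)\vee h(x_j)$ for $i,j\in\{1,2,3,4\}$, an elementary argument combining the triangle inequality for $H$ with the obvious inequality $h(x_i)\vee h(x_j)\leq (h(x_i)\vee h(x_k))+(h(x_k)\vee h(x_j))$ yields the product inequality
\[
a_{13}\,a_{24}\ \leq\ 2\,\max\{\,a_{12}\,a_{34},\ a_{14}\,a_{23}\,\},
\]
and taking logarithms gives $\delta$-hyperbolicity with $\delta=\log 2$.

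Next I would pass from $d_H$ to $d_F$. By Theorem \ref{Bquasiisome} the identity map $(D,d_F)\to(D,d_H)$ is a rough isometry: $|d_F-d_H|\leq B$. Rough isometries distort Gromov products by a bounded additive amount, so the four-point inequality transfers from $d_H$ to $d_F$ with $\delta$ replaced by $\delta+3B$, and hence $(D,d_F)$ is Gromov hyperbolic.

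For the compactification, fix a base point $o\in D$. The explicit formula for $d_H$ gives, up to a bounded error depending only on $o$,
\[
(x|y)^{H}_{o}\ =\ -\log\bigl(H(\pi(x),\pi(y))+h(x)\vee h(y)\bigr)+O(1).
\]
Therefore a sequence $(x_n)\subset D$ converges at infinity in $(D,d_H)$ if and only if $\delta_D(x_n)\to 0$ and $(\pi(x_n))$ is $H$-Cauchy, and two such sequences are equivalent if and only if their projections share the same $H$-limit. Since $\partial D$ is compact and of class $\mathcal{C}^2$, the intrinsic distance $H$ is bi-Lipschitz equivalent to the Euclidean one on $\partial D$; this produces a canonical bijection $\partial_G D\leftrightarrow\partial D$ for $d_H$, and the rough-isometry bound $|d_F-d_H|\leq B$ transports the same bijection to $d_F$. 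Matching the neighborhood bases of the Gromov and Euclidean topologies at each $\xi\in\partial D$ is then a direct consequence of the above formula for the Gromov product, giving the required homeomorphism $\overline{D}^G\cong\overline{D}$.

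The main obstacle is the algebraic product inequality for the quantities $a_{ij}$: although elementary, it is the single nontrivial analytic step, since it encodes why such hyperbolic filling metrics are $\delta$-hyperbolic. Apart from that, everything follows by combining standard facts about rough isometries with the explicit Bonk--Schramm structure of $d_H$ and the $\mathcal{C}^2$ regularity of $\partial D$.
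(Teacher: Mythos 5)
Your argument is essentially the paper's own proof: verify the four-point condition for $d_H$ via a product inequality for the quantities $a_{ij}=H(\pi(x_i),\pi(x_j))+h(x_i)\vee h(x_j)$, transfer it to $d_F$ through the additive bound of Theorem \ref{Bquasiisome}, and identify the Gromov boundary with $\partial D$ via the estimate $(x|y)_o^F=-\log\bigl(H(\pi(x),\pi(y))+h(x)\vee h(y)\bigr)+O(1)$. One correction: the product inequality is false with the constant $2$ --- take four equally spaced points on a great circle of a round sphere with $h_i\to 0$, so that $a_{13}a_{24}\to 4\max\{a_{12}a_{34},a_{14}a_{23}\}$ --- but it does hold with constant $4$ (as the paper asserts), and since any finite constant yields Gromov hyperbolicity this changes only the value of $\delta$, not the conclusion.
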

\proof
The proof of Gromov hyperbolicity is closely analogous to the proofs found in \cite[Theorem 7.2]{BS} and  \cite[Theorem 1.4]{BaBo}. Nevertheless, we will briefly recapitulate the argument here.

Let $x_1,x_2,x_3,x_4\in D$ be four arbitrary points of $D$ and for all $i,j\in\{1,2,3,4 \}$ set $r_{i,j}:=H(\pi(x_i),\pi(x_j))+h(x_i)\vee h(x_j)$. By the triangular inequality of $H$, we have
$$r_{1,2}r_{3,4}\leq4[(r_{1,3}r_{2,4})\vee(r_{1,4}r_{2,3})]$$
which implies
$$d_H(x_1,x_2)+d_H(x_3,x_4)\leq (d_H(x_1,x_3)+d_H(x_2,x_4))\vee(d_H(x_1,x_4)+d_H(x_2,x_3))+4\log2.$$
Finally, by Theorem \ref{Bquasiisome} we have
$$d_F(x_1,x_2)+d_F(x_3,x_4)\leq (d_F(x_1,x_3)+d_F(x_2,x_4))\vee(d_F(x_1,x_4)+d_F(x_2,x_3))+4\log2+4B$$
which implies by Remark \ref{4condII} that $(D,d_F)$ is Gromov hyperbolic.

Fix $o\in D$. Another straightforward computation, using Theorem \ref{Bquasiisome}, shows that there exists $B'>0$ such that for all $x,y\in D$,
$$\left|(x|y)_o^F+\log[H(\pi(x),\pi(y))+h(x)\vee h(y)]\right|\leq B',$$
where $(\cdot|\cdot)_o^F$ denotes the Gromov product with respect to $d_F$.
This implies that if $(x_n)$ and $(y_n)$ are two sequences in $D$ converging to infinity (Definion \ref{Gcomp}), then
$$\lim_{n\to+\infty}(x_n|y_n)_o^F=+\infty$$
if and only there exists $\xi\in\partial D$ such that $x_n\to\xi$ and $y_n\to\xi$. We conclude by reasoning similar to that in \cite[Proposition 1.2]{Zim1} or \cite[Theorem 3.3]{BNT}.
\endproof

Finally, Theorem \ref{MAIN1} easy follows from (\refeq{bilip}) and Corollary \ref{Gromovinv}.

\section{A Necessary condition for convex domains}

In this section we prove Theorem \ref{MAIN2}.

We start with the characterization of the hyperbolicity in convex domains.

\begin{theorem}\label{thDrFor}\cite[Theorem 5.1]{DrFor}
Let $D\subset\R^d$ $(d\geq3)$ be a convex domain, then the following conditions are equivalent.
\begin{enumerate}
\item The domain $D$ is hyperbolic;
\item The domain $D$ is complete hyperbolic;
\item The domain $D$ does not contain any 2-dimensional affine subspaces.
\end{enumerate}
\end{theorem}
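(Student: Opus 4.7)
The plan is to prove the cyclic implications $(2)\Rightarrow(1)\Rightarrow(3)\Rightarrow(2)$; the first is immediate since every complete distance is a distance.

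For $(1)\Rightarrow(3)$ I argue contrapositively. If $D$ contains an affine 2-plane $\Lambda$, then since rigid motions are $\rho_D$-isometries I may arrange $\Lambda=\R^2\times\{0\}$ and $0\in \Lambda$. For every $r>0$ the map $f_r(\zeta)=(r\zeta,0)$ lies in $\CH(\D,D)$ with $f_r(0)=0$ and $(f_r)_x(0)=re_1$, so $g_D(0,e_1)\leq 1/r$ for every $r$, hence $g_D(0,e_1)=0$. Translating along $\Lambda$ (again by isometry invariance) this shows $g_D\equiv 0$ on every tangent vector to $\Lambda$, so $\rho_D$ vanishes on $\Lambda\times\Lambda$ and $D$ is not hyperbolic.

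The substantive implication is $(3)\Rightarrow(2)$. The technical engine is an Ahlfors--Schwarz lower bound obtained from MPSH potentials. For any real-affine functional $\ell$ on $\R^d$ with $M_\ell:=\sup_D\ell<\infty$, the function $u_\ell:=-\log(M_\ell-\ell)$ is MPSH on $D$, because its Hessian is the rank-one positive semidefinite form $\nabla\ell\otimes\nabla\ell/(M_\ell-\ell)^2$, whose two smallest eigenvalues both vanish. For $f\in \CH(\D,D)$ the function $M_\ell-\ell\circ f$ is positive and harmonic on $\D$, so by the Harnack/Schwarz inequality $|\nabla(\ell\circ f)(0)|\leq C(M_\ell-\ell(f(0)))$ for a universal constant $C$. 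Taking the infimum over $f$'s realising a given jet $(x,v)$ yields
\[
g_D(x,v)\;\geq\;C^{-1}\frac{|\ell(v)|}{M_\ell-\ell(x)}, \qquad x\in D,\ v\in\R^d.
\]
Standard convex geometry reinterprets $(3)$ as: the lineality space $\mathcal{L}=\{v\in\R^d:\overline{D}+\R v\subset \overline{D}\}$ has dimension at most one. When $\mathcal{L}=\{0\}$ the domain is line-free, and for every direction $v\neq 0$ one finds an $\ell$ with $M_\ell<\infty$ and $\ell(v)\neq 0$; integrating the pointwise lower bound along curves leaving compacta of $D$, together with support-functional estimates near $\partial D$, forces $\rho_D$-divergence and delivers both hyperbolicity and completeness.

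The main obstacle is the case $\dim\mathcal{L}=1$, say $\mathcal{L}=\R e_1$: translation by $e_1$ is a $\rho_D$-isometry and no MPSH potential bounds $g_D(x,e_1)$ from below. Here I would decompose $\overline{D}=\R e_1\times\overline{D_0}$ with $D_0\subset e_1^\perp$ line-free convex, and use the conformal-harmonic relations $\|f_x\|=\|f_y\|$ and $\langle f_x,f_y\rangle=0$ to pin the $e_1$-component of any $f\in \CH(\D,D)$ to its transverse part $f'\colon\D\to D_0$: the condition $\|f_x\|^2-\|f_y\|^2=0$ forces $|(f_1)_x|^2-|(f_1)_y|^2=\|f'_y\|^2-\|f'_x\|^2$, so large derivatives of $f_1$ at $0$ would require comparably large derivatives of $f'$, which are precluded by a Schwarz-type bound for harmonic maps into the line-free set $D_0$ obtained from the previous paragraph (via exhaustion when $D_0$ is unbounded). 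This yields $g_D(x,e_1)>0$ and completeness of $\rho_D$ in the $e_1$-direction, completing $(3)\Rightarrow(2)$.
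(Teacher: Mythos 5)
First, context: the paper does not prove this statement at all --- it is imported verbatim from \cite[Theorem 5.1]{DrFor} --- so there is no in-paper argument to compare yours against, and your attempt must stand on its own. Its architecture is right: $(2)\Rightarrow(1)$ is trivial; your $(1)\Rightarrow(3)$ via arbitrarily large affine conformal disks inside a contained $2$-plane is correct, although the phrase ``translating by isometry invariance'' is off (translations along $\Lambda$ need not preserve $D$; just re-centre the disks $f_r(\zeta)=p+r(\zeta_1v+\zeta_2w)$ at each $p\in\Lambda$, which works because $\Lambda\subset D$); and the Harnack/support-functional bound $g_D(x,v)\geq|\ell(v)|/(2(M_\ell-\ell(x)))$ is exactly the half-space estimate of Lemma \ref{halfp} in disguise and is the correct engine for $(3)\Rightarrow(2)$ when $D$ is line-free.

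The genuine gap is in the case $\dim\mathcal{L}=1$, $D=\R e_1\times D_0$. Your conformality computation does yield $(f_1)_y(0)=0$ and $\|f'_y(0)\|=r$, hence a pointwise bound $g_D(x,e_1)\geq 1/C(x')$ where $C(x')$ is the Harnack gradient constant for harmonic maps into $D_0$ at the transverse point $x'$. But $C(x')$ is only locally bounded: if $D_0$ is unbounded (e.g.\ $D_0=\{(s,t):t>s^2\}$), then $M_\ell-\ell(x')\to\infty$ along $D_0$ and the bound degenerates. Pointwise positivity of $g_D(\cdot,e_1)$ therefore does not immediately give $\rho_D\bigl((0,x'),(T,x')\bigr)>0$: a competing curve can detour into the region of $D_0$ where $C$ is large, traverse the $e_1$-direction cheaply there, and return. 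To close this you need (i) the stronger estimate $g_D(x,v)\geq |v_1|/(2C(x'))$ for arbitrary $v$ (obtainable from the same conformality relations by a short computation), and (ii) a balancing argument pitting the $e_1$-cost $T/(2R)$ against the transverse logarithmic cost of order $\log(R/C_0)$ of reaching and leaving the region where $C\approx R$ --- an optimisation of the same flavour as Case 4.2 in the proof of Theorem \ref{Bquasiisome}. Separately, in both cases of $(3)\Rightarrow(2)$ you assert completeness (``forces $\rho_D$-divergence'') without proof; one must actually show closed $\rho_D$-balls are compact, i.e.\ that the support-functional inequalities both keep a ball away from $\partial D$ and force its recession cone to be trivial, and neither follows from the displayed pointwise inequality without the convex-geometric facts about the barrier cone that you only allude to.
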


First, we prove the following reduction result: if there exists a non-trivial conformal harmonic disk on the boundary, then there is also an affine one.

\begin{proposition}\label{affinediskboundary}
Let $D\subset\R^d$ $(d\geq3)$ be a convex domain and $f\colon \D\to \partial D$ be a non-constant conformal harmonic function. Then there exists an affine 2-plane $L$ such that $\partial D\cap L$ has non-empty interior in $L$.
\end{proposition}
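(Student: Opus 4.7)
My plan is to combine a supporting-hyperplane argument with the maximum principle to confine $f(\D)$ to a supporting hyperplane $H$, and then to use convexity of $\overline D$ to upgrade three points in the image to a two-dimensional triangle sitting inside both an affine 2-plane and $\partial D$.

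First I would set $p := f(0) \in \partial D$ and, by convexity of $D$, pick a non-zero linear functional $u \colon \R^d \to \R$ with $u \leq u(p)$ on $\overline D$, writing $H := \{x \in \R^d : u(x) = u(p)\}$ for the corresponding supporting hyperplane at $p$. Then $u \circ f \colon \D \to \R$ is harmonic (linear composed with harmonic), bounded above by $u(p)$, and attains that value at the interior point $0$; the maximum principle forces $u \circ f \equiv u(p)$, so $f(\D) \subset H$.

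Next I would show that $f(\D)$ is not contained in any affine line. If it were, we could write $f(z) = p_0 + \phi(z) v$ for some unit vector $v \in \R^d$ and a harmonic function $\phi \colon \D \to \R$. The conformality relations $||f_x|| = ||f_y||$ and $\langle f_x, f_y\rangle = 0$ would reduce to $\phi_x^2 = \phi_y^2$ and $\phi_x \phi_y = 0$, forcing $\phi_x \equiv \phi_y \equiv 0$ and contradicting the non-constancy of $f$. I expect this to be the most delicate point of the argument, since it is precisely conformality (not mere harmonicity) that prevents the image from collapsing into a one-dimensional set.

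Finally I would pick three affinely independent points $p_0, p_1, p_2 \in f(\D)$ and let $L$ be the affine 2-plane they span; note that $L \subset H$ by the first step. The closed triangle $T$ with vertices $p_0, p_1, p_2$ lies in $\overline D$ by convexity and in $H$ since its vertices do. Because a non-zero linear functional cannot attain its maximum over $\overline D$ at an interior point of $D$, we have $D \cap H = \emptyset$, so $T \subset \overline D \cap H = \partial D \cap H \subset \partial D$. Since $T$ is a non-degenerate triangle inside $L$, it has non-empty interior in $L$, and therefore so does $\partial D \cap L \supset T$, concluding the proof.
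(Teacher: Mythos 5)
Your proposal is correct and follows essentially the same route as the paper: confine $f(\D)$ to a supporting hyperplane via the maximum principle, use conformality to rule out a one-dimensional image, and use convexity of $D$ to pass from points of the image to a two-dimensional subset of $\partial D$ inside an affine $2$-plane. Your write-up is merely more detailed at the conformality step, which the paper leaves as a one-line remark.
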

\proof
Let $\Delta:=f(\D)$.
By a rigid change of coordinates, we can assume that $D\subseteq \H^d$ and the origin of $\R^d$ is contained in $\Delta$. Write $f=(f_1,f_2)\in\R\times\R^{d-1}$. Since $f$ is harmonic, by the maximum principle we have $f_1\equiv0$, so $\Delta\subset\partial D\cap\{x=0\}$.

Let $\hat{\Delta}$ denote the convex hull of $\Delta$. By the convexity of $D$, we have $\hat{\Delta}\subset\partial D\cap\{x=0\}$. 
Finally, since $f$ is conformal, $\hat{\Delta}$ cannot be of dimension $1$. This implies the statement.\endproof

Now we calculate the minimal metric of the half-space $\H^d$.

\begin{lemma}\label{halfp}
Let $\H^d$ $(d\geq3)$ be the half-space, then
$$g_{\H^d}((x,y),(u,v))=\frac{|u|}{2x}$$
for all $(x,y)\in\H^d$ and $(u,v)\in\R\times\R^{d-1}$. \end{lemma}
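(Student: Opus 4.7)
The plan is to prove the two inequalities $g_{\H^d}((x,y),(u,v)) \geq |u|/(2x)$ and $g_{\H^d}((x,y),(u,v)) \leq |u|/(2x)$ separately: the lower bound by projecting onto the first coordinate and applying Schwarz's lemma, the upper bound by exhausting $\H^d$ with Euclidean balls and using Example \ref{exball}.

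For the lower bound, I would take an arbitrary $f \in \CH(\D, \H^d)$ with $f(0) = (x,y)$ and $f_x(0) = r(u,v)$, write $f = (f_1, f')$ with $f_1 \colon \D \to (0,+\infty)$, and exploit the fact that the first coordinate is a positive harmonic function. Writing $f_1 = \Re F$ for a holomorphic $F \colon \D \to \{\Re z > 0\}$ with $F(0) = x$ and composing with the Möbius map $z \mapsto (z-x)/(z+x)$ of the right half-plane onto $\D$ (which sends $x$ to $0$), Schwarz's lemma gives $|F'(0)| \leq 2x$. Since $(f_1)_x(0) = \Re F'(0) = ru$, this yields $|ru| \leq 2x$; taking the infimum of $1/r$ gives $g_{\H^d}((x,y),(u,v)) \geq |u|/(2x)$, which is also trivially valid when $u = 0$.

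For the upper bound, I would observe that for every $s > 0$ the Euclidean ball $B_s := B((x+s, y),\, x+s)$ is contained in $\H^d$ and contains the point $(x, y)$, so by the contraction principle for the minimal metric
$$g_{\H^d}((x,y),(u,v)) \leq g_{B_s}((x,y),(u,v)).$$
Combining the scaling law $g_{B(o,R)}(p, w) = R^{-1} g_{\B^d}(R^{-1}(p-o), w)$ with the explicit formula of Example \ref{exball}, a direct computation gives
$$g_{B_s}((x,y),(u,v))^2 = \frac{u^2 + \|v\|^2}{x(x+2s)} + \frac{s^2 u^2}{x^2(x+2s)^2},$$
and sending $s \to +\infty$ the first summand vanishes while the second tends to $u^2/(4x^2)$, producing the bound $g_{\H^d}((x,y),(u,v)) \leq |u|/(2x)$.

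I expect the main obstacle to be purely bookkeeping in the ball calculation — carefully tracking the normal and tangential decomposition of $(u,v)$ relative to the center $o_s$ and verifying that the tangential contribution scales away as $s \to \infty$. The Schwarz-lemma half is classical. Note also that the hypothesis $d \geq 3$ enters only through the invocation of Example \ref{exball}.
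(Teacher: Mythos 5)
Your proof is correct. The lower bound is the same argument as the paper's (the paper quotes the Harnack-type gradient estimate $|\nabla f_1(0)|\leq 2f_1(0)$ for the positive harmonic first component directly from \cite{DrFor}, which is exactly what your M\"obius-plus-Schwarz computation establishes). The upper bound, however, takes a genuinely different route. The paper constructs an explicit affine conformal harmonic map $f\colon\H^2\to\H^d$, $f(a,b)=(au,\,y-\tfrac{x}{u}+av+bw)$ with $w\perp v$ and $\|w\|^2=u^2+\|v\|^2$, and then invokes the Schwarz--Pick property \eqref{kconformal} against the hyperbolic metric $\kappa_{\H^2}((a,b),\nu)=\|\nu\|/(2a)$; this is elementary and exhibits the (essentially extremal) surface directly. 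You instead exhaust $\H^d$ by the inscribed balls $B((x+s,y),x+s)$, apply the contraction property and the scaling law, and let $s\to+\infty$ in the exact Beltrami--Cayley formula of Example \ref{exball}; your limit computation checks out, with $1-\|q\|^2=x(x+2s)/(x+s)^2$ for the rescaled point $q$ giving precisely the displayed expression. The trade-off is that your argument leans on the full strength of the Forstneri\v{c}--Kalaj computation of $g_{\B^d}$ (a nontrivial input), whereas the paper's only needs the general inequality \eqref{kconformal}; on the other hand, yours avoids constructing the auxiliary vector $w$ and the case split on the sign of $u$. Both arguments use $d\geq 3$ in an essential way: the paper's through the existence of $w\in\R^{d-1}$ orthogonal to $v$ with prescribed norm, yours through Example \ref{exball}.
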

\proof
\textbf{Lower bound:} The estimate has already been proven in the proof of \cite[Lemma 5.2]{DrFor}, but we will repeat the argument for completeness.

Let $(x,y)\in\H^d$ and $(u,v)\in\R^d$. Suppose $f=(f_1,f_2)\colon \D\to \H^d$ is a conformal harmonic disk such that $f(0)=(x,y)$ and $f_x(0)=r(u,v)$ for some $r>0$. Since $f_1\colon \D\to[0,+\infty)$ is a positive harmonic function with $f_1(0)=x$ and $(f_1)_x(0)=ru$, by the Schwarz lemma we have
$$r|u|=|(f_1)_x(0)|\leq|\nabla f_1(0)|\leq2|f_1(0)|=2x,$$
which implies
$$\frac{1}{r}\geq\frac{|u|}{2x}.$$
Therefore, by definition
$$g_{\H^d}((x,y),(u,v))\geq\frac{|u|}{2x}.$$

\textbf{Upper bound:}
Let $(x,y)\in\H^d$ and $(u,v)\in\R^d$.

If $u=0$, it clear that $g_{\H^d}((x,y),(u,v))=0$.

If $u\neq0$, since the minimal metric is absolutely homogeneous, we can suppose that $u>0$. Let $w\in\R^{d-1}$ such that $\langle v,w\rangle=0$ and $||w||^2=u^2+||v||^2$. Now consider the affine map 
$f\colon \H^2\to\R^d$ given by
$$f(a,b)=\left(au, y-\frac{x}{u}+av+bw\right).$$
Notice that $f$ is conformal,
and $f(\H^2)\subset\H^d$. We recall that the hyperbolic metric $\kappa_{\H^2}$ of $\H^2$ is
$$\kappa_{\H^2}((a,b),\nu)=\frac{||\nu||}{2a}, \ \ \ (a,b)\in\H^2,\nu\in\R^2.$$ Since $f\left(\frac{x}{u},0\right)=(x,y)$ and $df(e_1)=(u,v)$, by (\refeq{kconformal})
$$g_{\H^d}((x,y),(u,v))\leq\kappa_{\H^2}\left(\frac{x}{u},e_1\right)=\frac{u}{2x}.$$
\endproof

We now prove a lower bound for the minimal distance in hyperbolic convex domains. Note the analogies with the complex case \cite[Lemma 2.6]{Zim1}.

\begin{lemma}\label{cvxest1}
Let $D\subset\R^d$ $(d\geq3)$ be a hyperbolic convex domain, and $p,q\in D$. Let $L$ be the real line containing $p$ and $q$, and $\xi\in L\backslash D$ then
$$\frac{1}{2}\left|\log\left( \frac{||\xi-p||}{||\xi-q||}\right)\right|\leq\rho_D(p,q).$$
\end{lemma}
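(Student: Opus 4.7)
The plan is to reduce the lower bound to the half-space case via a supporting hyperplane argument. Because the segment of $L$ running from $p$ (or $q$) towards $\xi$ must exit the open convex set $D$ before reaching $\xi$, there is a first exit point $\xi_0\in\partial D\cap L$ lying on the $\xi$-side of the chord $[p,q]$. Invoking convexity of $D$ and the Hahn--Banach theorem, one obtains a closed half-space $H^+$ with $D\subset H^+$ and $\xi_0\in\partial H^+$; since $D$ is open, in fact $D\subset H:=\mathrm{int}\,H^+$, which is rigidly congruent to $\H^d$.

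From the inclusion $D\subset H$ and the fact that the identity $D\hookrightarrow H$ is trivially a conformal harmonic embedding, the definition of the minimal metric yields $g_H\le g_D$ on $D$, whence
\[\rho_D(p,q)\ge\rho_H(p,q).\]
Now I invoke Lemma~\ref{halfp}: choosing coordinates with $H=\H^d=\{x_1>0\}$ and $\xi_0$ at the origin, for any piecewise $\mathcal{C}^1$ curve $\gamma=(\gamma_1,\gamma')\colon[0,1]\to H$ from $p$ to $q$,
\[\int_0^1 g_H(\gamma(t),\dot\gamma(t))\,dt=\int_0^1\frac{|\dot\gamma_1(t)|}{2\gamma_1(t)}\,dt\ge\frac{1}{2}\Bigl|\log\frac{q_1}{p_1}\Bigr|.\]
Taking the infimum over all such curves, $\rho_H(p,q)\ge\tfrac12\bigl|\log(p_1/q_1)\bigr|$.

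Finally I translate this back into the Euclidean data on the line $L$. Since $L$ passes through $\xi_0=0$ and meets the open half-space $H$, it is transverse to $\partial H$; pick a unit direction vector $w$ for $L$ with $w_1>0$, and write $p=t_p w$, $q=t_q w$, $\xi=t_\xi w$ with $t_p,t_q>0$ and, by construction of $\xi_0$, with $t_\xi\le 0$. Then $p_1/q_1=t_p/t_q$, while
\[\frac{\|\xi-p\|}{\|\xi-q\|}=\frac{t_p+|t_\xi|}{t_q+|t_\xi|}.\]
Assuming without loss of generality $t_p\ge t_q$, both ratios exceed $1$, and an elementary calculation shows $t_p/t_q\ge (t_p+|t_\xi|)/(t_q+|t_\xi|)$. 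Chaining this with the previous display yields the claimed lower bound (the absolute value is recovered by swapping the roles of $p$ and $q$).

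I do not anticipate a serious obstacle: the argument is the standard ``squeeze into a half-space'' trick familiar from Hilbert and Kobayashi geometry. The only points requiring a bit of care are verifying the existence of $\xi_0$ as a genuine \emph{finite} boundary point strictly separating the interior points from $\xi$, the transversality of $L$ to $\partial H$ (forced because $L$ meets the open half-space $H$), and the monotone dependence of $(t_p+|t_\xi|)/(t_q+|t_\xi|)$ on $|t_\xi|$, which is what guarantees that the worst case of the ratio indeed occurs at $\xi=\xi_0$ and hence that the bound extends from the specific choice $\xi=\xi_0$ to an arbitrary $\xi\in L\setminus D$.
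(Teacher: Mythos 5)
Your argument is correct and is essentially the paper's: both proofs squeeze $D$ into a half-space, invoke the monotonicity $g_{\H^d}\le g_D$ together with Lemma~\ref{halfp} to get $\rho_D(p,q)\ge\tfrac12|\log(p_1/q_1)|$, and then convert the coordinate ratio into the Euclidean ratio along $L$. The only (harmless) difference is that the paper separates $D$ from $\xi$ by a hyperplane through $\xi$ itself, so Thales gives $p_1/q_1=\|\xi-p\|/\|\xi-q\|$ exactly, whereas you support at the boundary point $\xi_0$ and need the extra elementary monotonicity of $(t_p+|t_\xi|)/(t_q+|t_\xi|)$ in $|t_\xi|$ to pass from $\xi_0$ to a general $\xi\in L\setminus D$.
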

\proof
Up to a rigid change of coordinates, we can suppose that $\xi$ is the origin and $D\subset \H^d$.
Clearly we have
$$\rho_{\H^d}(x,y)\leq\rho_D(x,y).$$
Let $\gamma\colon [0,1]\to \H^d$ be a piecewise $\mathcal{C}^1$ curve joining $p=(p_1,p_2)$ and $q=(q_1,q_2)$. If we write $\gamma(t)=(\gamma_1(t),\gamma_2(t))\in\R\times\R^{d-1}$, then by Lemma \ref{halfp}
$$\int_0^1g_{\H^d}(\gamma(t),\dot\gamma(t))dt\geq\int_0^1\frac{|\dot\gamma_1(t)|}{2\gamma_1(t)}dt\geq\frac{1}{2}\left|\log\left(\frac{p_1}{q_1}\right)\right|,$$
so if we take the infimum over all curves $\gamma$, we obtain
$$\rho_{\H^d}(x,y)\geq\left|\log\left(\frac{p_1}{q_1}\right)\right|.$$
We conclude noticing that by Thales's theorem $$\frac{||p||}{||q||}=\frac{p_1}{q_1}.$$
\endproof

Now we will derive an upper bound estimate. Let $D\subset\R^d$ $(d\geq3)$ be a domain, $x\in D$ and $v\in\R^d$ nonzero. Let  $\Lambda\in G_2(\R^d)$, then
$$\delta_D(x,\Lambda):=\inf\{||x-y||: y\in (x+\Lambda)\backslash D\}$$
and
$$\hat{\delta}_D(x,v):=\max\{\delta_{D}(x,\Lambda):\Lambda\in G_2(\R^d)\ \mbox{s.t.}\ v\in\Lambda\}.$$

\begin{lemma}\label{cvxest2}
Let $D\subset\R^d$ $(d\geq3)$ be a domain. Then for all $x\in D, v\in\R^d$ nonzero we have
$$g_D(x,v)\leq \frac{||v||}{\hat{\delta}_D(x,v)}.$$
\end{lemma}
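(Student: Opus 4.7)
The plan is to build a single affine conformal harmonic disk of maximal radius inside $D$ lying in the optimal $2$-plane direction. Fix any $\Lambda\in G_2(\R^d)$ with $v\in\Lambda$ and set $r:=\delta_D(x,\Lambda)$; by the definition of $\delta_D(x,\Lambda)$, the open $2$-disk of radius $r$ in $x+\Lambda$ centered at $x$ is entirely contained in $D$. The goal is to parametrize this planar disk by $\D$ with an affine conformal map whose derivative at $0$ points in the direction $v$, and then read off the minimal metric estimate directly from the definition.

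Concretely, I would choose the orthonormal basis $u:=v/\|v\|$ and a unit vector $w\in\Lambda$ with $w\perp u$, and define
\[
f\colon\D\to\R^d,\qquad f(\zeta_1,\zeta_2)=x+r(\zeta_1 u+\zeta_2 w).
\]
Then $f$ is affine (hence harmonic), $f_x=ru$ and $f_y=rw$ satisfy $\|f_x\|=\|f_y\|=r$ and $\langle f_x,f_y\rangle=0$, so $f$ is conformal harmonic. Since $\|f(\zeta)-x\|<r$, the image lies in the open disk of radius $r$ in $x+\Lambda$, which is contained in $D$; thus $f\in\CH(\D,D)$. Moreover $f(0)=x$ and
\[
f_x(0)=ru=\frac{r}{\|v\|}\,v,
\]
so by the defining infimum of $g_D$,
\[
g_D(x,v)\;\le\;\frac{\|v\|}{r}\;=\;\frac{\|v\|}{\delta_D(x,\Lambda)}.
\]

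Finally I would take the supremum over all $\Lambda\in G_2(\R^d)$ containing $v$ on the right-hand side, which turns $1/\delta_D(x,\Lambda)$ into $1/\hat\delta_D(x,v)$ and yields the claimed bound. The only subtlety worth a sentence is the degenerate case $\hat\delta_D(x,v)=+\infty$, meaning some $2$-plane $x+\Lambda$ with $v\in\Lambda$ lies entirely in $D$; then the same affine parametrization can be done with arbitrarily large $r$, giving $g_D(x,v)\le\|v\|/r$ for every $r>0$ and hence $g_D(x,v)=0$, consistent with $\|v\|/\hat\delta_D(x,v)=0$ under the convention $1/\infty=0$. There is no real obstacle here: the whole content is that planar $2$-disks are automatically conformal harmonic surfaces in $\R^d$, so the minimal metric inherits the same ``largest embedded disk'' upper bound that the Kobayashi metric enjoys in $\C^d$.
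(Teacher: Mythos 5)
Your proof is correct and is essentially the paper's argument: the paper also inscribes the affine conformal disk $f(a,b)=x+r(av+bu)$ in the optimal $2$-plane and reads off $g_D(x,v)\le\|v\|/r$ from the definition. The only cosmetic difference is that you argue for an arbitrary $\Lambda$ and then take the supremum (also handling the case $\hat\delta_D(x,v)=+\infty$), while the paper picks the maximizing plane at the outset.
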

\proof First of all we may assume that $||v||=1$. Let $\Lambda$ be a 2-plane containing $v$ such that $\hat{\delta}_D(x,v)=\delta_D(x,\Lambda)=:r$. Let $u\in\Lambda$ be such that $||u||=1$ and $\langle u,v\rangle=0$. Consider the conformal harmonic disk $f\colon \D\to\R^d$ given by $$f(a,b)=x+r(a v+b u).$$
Notice that $f(\D)\subset D$ and $f_x(0)=rv$, so by definition
$$g(x,v)\leq \frac{1}{r}.$$

\endproof

The following lemma on the construction of quasi-geodesics is analogous to the lemma proved in the complex case in \cite[Lemma 3.2]{Zim1}.

\begin{lemma}[Quasi-geodesics]\label{qgcvx}
Let $D\subset\R^d$ $(d\geq3)$ be a convex domain, $p\in D$ and $\xi\in\partial D$. Let $\epsilon>0$ such that
$$\hat{\delta}_D(p,\xi-p)\geq\epsilon||\xi-p||.$$ Then the curve $\sigma\colon [0,+\infty)\to D$ given by 
$$\sigma(t)=\xi+e^{-2t}(p-\xi)$$
has the property
$$|t-s|\leq \rho_D(\sigma(s),\sigma(t))\leq 2\epsilon^{-1}|t-s|,$$
for all $t,s\geq0$. Therefore, $\sigma$ is $(2\epsilon^{-1},0)$ quasi-geodesic. 
\end{lemma}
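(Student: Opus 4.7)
The plan is to establish the two sides of the inequality separately, the lower bound from Lemma \ref{cvxest1} and the upper bound by integrating the Finsler metric $g_D$ along $\sigma$ and estimating it via Lemma \ref{cvxest2}.

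For the lower bound, observe that both $\sigma(s)=\xi+e^{-2s}(p-\xi)$ and $\sigma(t)=\xi+e^{-2t}(p-\xi)$ lie on the real line through $p$ and $\xi$, and $\xi\in\partial D\subset L\setminus D$. Since $\|\xi-\sigma(s)\|=e^{-2s}\|p-\xi\|$ and $\|\xi-\sigma(t)\|=e^{-2t}\|p-\xi\|$, Lemma \ref{cvxest1} directly yields
$$\rho_D(\sigma(s),\sigma(t))\geq\frac{1}{2}\left|\log\frac{\|\xi-\sigma(s)\|}{\|\xi-\sigma(t)\|}\right|=\frac{1}{2}\bigl|2(t-s)\bigr|=|t-s|.$$

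For the upper bound, assume $s\leq t$ and compute $\dot{\sigma}(u)=-2e^{-2u}(p-\xi)$, so $\|\dot\sigma(u)\|=2e^{-2u}\|\xi-p\|$ and $\dot\sigma(u)$ has the direction of $\xi-p$. The crucial step is to show that
$$\hat{\delta}_D(\sigma(u),\dot\sigma(u))\geq e^{-2u}\,\epsilon\,\|\xi-p\|\qquad\text{for all }u\geq0.$$
Once this is established, Lemma \ref{cvxest2} gives $g_D(\sigma(u),\dot\sigma(u))\leq 2/\epsilon$, and integration yields $\rho_D(\sigma(s),\sigma(t))\leq 2\epsilon^{-1}(t-s)$.

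To prove this estimate, pick a $2$-plane $\Lambda\in G_2(\R^d)$ containing $\xi-p$ with $\delta_D(p,\Lambda)\geq\epsilon\|\xi-p\|$, which exists by hypothesis. Since $\sigma(u)-p=(1-e^{-2u})(\xi-p)\in\Lambda$, the affine $2$-plane $\sigma(u)+\Lambda$ coincides with $p+\Lambda$. Let $B$ denote the open disk of radius $\epsilon\|\xi-p\|$ around $p$ inside $p+\Lambda$; by choice of $\Lambda$ we have $B\subset D$, and $\xi\in\overline{D}\cap(p+\Lambda)$. By convexity of $D$, the convex hull $\operatorname{conv}(B\cup\{\xi\})$ inside $p+\Lambda$ lies in $\overline{D}$, and a standard elementary similarity argument shows that the cross-section of this cone at $\sigma(u)$ (a fraction $e^{-2u}$ of the way from $\xi$ toward $p$) contains a disk of radius $e^{-2u}\epsilon\|\xi-p\|$ around $\sigma(u)$. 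Hence $\delta_D(\sigma(u),\Lambda)\geq e^{-2u}\epsilon\|\xi-p\|$, giving the claimed bound on $\hat\delta_D$.

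The main technical point is the convex-geometric argument in the last paragraph: it requires identifying the correct $2$-plane (the same one that certifies the lower bound $\hat\delta_D(p,\xi-p)\geq\epsilon\|\xi-p\|$) and observing that all the points $\sigma(u)$ lie in that plane, so that convexity transfers the ball around $p$ to a ball around $\sigma(u)$ of radius scaled by the linear factor $e^{-2u}$. This linear scaling of $\hat\delta_D$ conveniently cancels the linear scaling of $\|\dot\sigma(u)\|$, producing the uniform bound $g_D(\sigma(u),\dot\sigma(u))\leq 2/\epsilon$ that drives the quasi-geodesic conclusion.
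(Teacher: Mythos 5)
Your proof is correct and follows essentially the same route as the paper: the lower bound via Lemma \ref{cvxest1} applied to the boundary point $\xi$ on the line through $\sigma(s)$ and $\sigma(t)$, and the upper bound by fixing the optimal $2$-plane $\Lambda$ containing $\xi-p$, using convexity to get $\delta_D(\sigma(u),\Lambda)\geq \epsilon e^{-2u}\|\xi-p\|$, and then applying Lemma \ref{cvxest2} and integrating. Your cone/similarity argument simply spells out the convexity step the paper states in one line, so no further comment is needed.
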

\proof
The lower bound is a simple application of Lemma \ref{cvxest1}.

For the upper bound, let $\Lambda\in G_2(\R^d)$ be such that $$\delta_{D}(p,\Lambda)=\hat{\delta}_D(p,\xi-p)\geq\epsilon||\xi-p||.$$
Since $D$ is convex, we have for all $u\geq0$ $$\delta_D(\sigma(u),\Lambda)\geq\epsilon e^{-2u}||\xi-p||,$$
so by Lemma \ref{cvxest2}
$$g_D(\sigma(u),\dot{\sigma}(u))\leq \frac{||\dot{\sigma}(u)||}{\hat{\delta}_D(\sigma(u),\dot{\sigma}(u))}\leq \frac{||\dot{\sigma}(u)||}{\delta_D(\sigma(u),\Lambda)}\leq\frac{2e^{-2u}||\xi-p||}{\epsilon e^{-2u}||\xi-p||}=2\epsilon^{-1}.$$
Finally, for all $0\leq s\leq t$
$$\rho_D(\sigma(s),\sigma(t))\leq\int_s^t g_D(\sigma(u),\dot{\sigma}(u))du\leq 2\epsilon^{-1}(t-s).$$
\endproof
We can now prove Theorem \ref{MAIN2}. In the following proof, we denote the segment between $x,y\in\R^d$  by $[x,y]$.

\begin{proof}[Proof of Theorem \ref{MAIN2}]
The main idea of the proof, which closely resembles \cite[Theorem 3.1]{Zim1}, is to construct a sequence of quasi-geodesic triangles that are not uniformly slim, in contrast to Theorem \ref{thinqtriangle}.

First of all, by Proposition \ref{affinediskboundary}, $\partial D$ contains an affine disk in the boundary.	
Let $\Lambda$ be an affine plane such that $\partial D\cap \Lambda=:F$ has non-empty interior $F^{\mathrm{o}}$ in $\Lambda$. Fix $\xi\in F^{\mathrm{o}}$,$\eta\in F\backslash F^o$ and $p\in D$.

By Lemma \ref{qgcvx} the curves
$$\gamma(t)=\xi+e^{-2t}(p-\xi)$$
and
$$\sigma(t)=\eta+e^{-2t}(p-\eta)$$
are $(A,0)$ quasi geodesics for some $A>1$.\medskip

\textbf{Claim 1:} There exists $A'\geq A$ and $T_0>0$ such that for all $T\geq T_0$, the segment $[\gamma(T),\sigma(T)]$ can be parameterized to be a $(A',0) $ quasi geodesic.
\proof
First of all notice that the vector $\gamma(T)-\sigma(T)$ is parallel to $\Lambda$. Therefore, by the convexity of $D$, there exists $c>0$ and $T_0>0$ such that for all $T>T_0$ we have $$\hat{\delta}_D(\gamma(T),\gamma(T)-\sigma(T))\geq c>0.$$
This implies, by Lemma \ref{qgcvx}, that the segment  $[\gamma(T),\sigma(T)]$ can be parametrized to be a $(A',0)$ with $A'>A$.
\endproof

\textbf{Claim 2:} $\lim_{t\to+\infty}\rho_D(\gamma(t),\sigma[0,+\infty))=+\infty$.
\proof
For all $t\geq0$, let $s_t\geq0$ such that $$\rho_D(\gamma(t),\sigma[0,+\infty))=\rho_D(\gamma(t),\sigma(s_t)).$$
By contradiction, suppose that there exists $t_n\nearrow+\infty$ such that $\rho_D(\gamma(t_n),\sigma(s_{t_n}))<R$. Then
$$\rho_D(\sigma(s_{t_n}),p)\geq\rho_D(\gamma(t_n),p)-\rho_D(\gamma(t_n),\sigma(s_{t_n}))>\rho_D(\gamma({t_n}),p)-R\to+\infty,$$
so $s_{t_n}\to+\infty$. Consider the intersection between the line joining $\gamma({t_n})$ and $\sigma(s_{t_n})$ with $\partial D$, and denote by $\eta_{t_n}$ the point in such intersection that is closest to $\sigma(s_{t_n})$. Clearly $\eta_{t_n}\to\eta$ and $||\gamma({t_n})-\eta_{t_n}||\to||\xi-\eta||>0$, whereas $||\sigma({t_n})-\eta_{t_n}||\to0$. Using Lemma \ref{cvxest1},  we obtain
$$\rho_D(\gamma({t_n}),\sigma(s_{t_n}))\geq\frac{1}{2}\left|\log\left( \frac{||\gamma({t_n})-\eta_{t_n}||}{||\sigma({t_n})-\eta_{t_n}||}\right)\right|\to+\infty,$$
obtaining a contradiction.
\endproof

\textbf{Claim 3:} For any $M>0$ there exists $T>T_0$ such that the quasi-geodesic triangle with sides $[p,\gamma(T)], [\gamma(T),\sigma(T)]$ and $[\sigma(T),p]$ is not $M$-thin.

\proof
By Claim 2, there exist $t>0$ such that $\rho_D(\gamma(t),\sigma[0,+\infty))>M$. Now by Lemma \ref{cvxest1} we have
$$\lim_{T\to+\infty}\rho_D(\gamma(t),[\gamma(T),\sigma(T)])=+\infty,$$
so there exists $T>t$ such that 
$$\rho_D(\gamma(t),[\gamma(T),\sigma(T)]\cup[\sigma(T),p])>M,$$
which means that the quasi-geodesic triangle $[p,\gamma(T)], [\gamma(T),\sigma(T)], [\sigma(T),p]$ is not $M$-thin.
\endproof

To summarize, there exists $A'>1$ and a family of $(A',0)$ quasi-geodesic triangles that are not $M$-thin for all $M>0$, so by Theorem \ref{thinqtriangle} the metric space $(D,\rho_D)$ is not Gromov hyperbolic.
\end{proof}

\section{The Minimal metric and the Hilbert metric in convex domains}
Let $D \subset \mathbb{R}^d$ be a convex domain, and denote by $\overline{D}^*$ and $\partial^* D$ its closure and boundary, respectively, in the one-point compactification of $\mathbb{R}^d$. There exists a natural pseudodistance $H_D$ called the \textit{Hilbert metric} of $D$. Given two distinct points $x, y \in D$, let $L_{x,y}$ be the real line passing through $x$ and $y$, and let $a, b \in \partial^* D$ be the endpoints of $L_{x,y}$ in $\overline{D}^*$, ordered as $a, x, y, b$. Then the Hilbert metric is defined to be equal to $0$ if $x = y$ and, if $x \neq y$, it is defined by the formula

$$H_D(x,y):=\frac{1}{2}\log\left(\frac{||x-b||}{||x-a||}\frac{||y-a||}{||y-b||}\right)$$
with the convention that
$$\frac{||x-\infty||}{||y-\infty||}=\frac{||y-\infty||}{||x-\infty||}=1.$$

Notice that if $D$ does not contain any affine real lines, then $H_D(x,y) > 0$ for all distinct points $x, y \in D$.

Furthermore, $H_D$ is also the intrinsic distance associated with the following Finsler metric. Let $L_{x,v} := x + \mathbb{R} v$, where $a, b \in \partial^* D$ are the endpoints of $L_{x,v}$ in $\overline{D}^*$. We define
$$h_D(x,v):=\frac{||v||}{2}\left(\frac{1}{||x-a||}+\frac{1}{||x-b||}\right),$$
with the convention that $$\frac{1}{||x-\infty||}=0$$

Since convex domains naturally exhibit two metrics (the minimal metric and the Hilbert metric), it is natural to study their relationships.

In the unit Euclidean ball of $\R^d$ $(d\geq3)$ the Hilbert metric turns out to be the Beltrami-Cayley metric of $\B^d$, so by Example \ref{exball} in $\B^d$ the minimal metric and the Hilbert metric coincide. They also coincide in the half-space $\H^d$ (Lemma \ref{halfp}). It is unclear whether these are the only two cases where this occurs.

However, for a generic convex domain, we always have the following inequality.

\begin{proposition}\label{gvsh}
Let $D\subset\R^d$ $(d\geq3)$ be a convex domain. Then 
$$h_D(x,v)\leq2g_D(x,v), \ \ \ x\in D,v\in\R^d$$
and
$$H_D(x,y)\leq2\rho_D(x,y), \ \ \ x,y\in D.$$
\end{proposition}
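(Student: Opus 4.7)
The plan is to first establish the Finsler-level inequality $h_D(x,v)\le 2g_D(x,v)$ pointwise, and then deduce $H_D(x,y)\le 2\rho_D(x,y)$ by integration, since $H_D$ and $\rho_D$ are the intrinsic distances of $h_D$ and $g_D$ respectively.

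For the infinitesimal bound, fix $x\in D$ and a nonzero $v\in\R^d$, and let $a,b\in\partial^* D$ be the endpoints of the line $L_{x,v}=x+\R v$ in $\overline{D}^*$. Take an arbitrary conformal harmonic disk $f\in\CH(\D,D)$ with $f(0)=x$ and $f_x(0)=rv$. The key tool is the classical gradient estimate for positive harmonic functions on the disk: if $u\colon\D\to(0,+\infty)$ is harmonic, then $|\nabla u(0)|\le 2u(0)$ (an immediate consequence of Harnack's inequality). I would apply this to $u=\alpha\circ f$, where $\alpha\colon\R^d\to\R$ is an affine functional defining a supporting hyperplane of $D$ at the finite endpoint $a$, normalized so that $\alpha(a)=0$ and $\alpha>0$ on $D$; such an $\alpha$ exists by convexity. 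Since $x-a$ is a scalar multiple of $v$, a short computation gives
\[
\alpha(x)=\frac{\|x-a\|}{\|v\|}\,|d\alpha(v)|,
\]
and since $|u_x(0)|=r\,|d\alpha(v)|\le|\nabla u(0)|\le 2u(0)=2\alpha(x)$, we obtain $r\|v\|\le 2\|x-a\|$, i.e. $\|v\|/(2\|x-a\|)\le 1/r$. Repeating the argument at $b$ yields $\|v\|/(2\|x-b\|)\le 1/r$. Summing the two and taking the infimum over admissible $r$ gives $h_D(x,v)\le 2g_D(x,v)$.

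The cases in which one or both endpoints lie at infinity require only cosmetic adjustments: the corresponding term in $h_D(x,v)$ vanishes by the convention $1/\|x-\infty\|=0$, and one uses the estimate at the remaining finite endpoint; if both are at infinity then $D$ contains the entire line $L_{x,v}$, so $h_D(x,v)=0$ and there is nothing to prove. One minor point to verify is that $d\alpha(v)\neq 0$ whenever $a$ is a finite endpoint, which holds because otherwise $x-a$ would lie in the supporting hyperplane $\{\alpha=0\}$, forcing $\alpha(x)=0$ and contradicting $x\in D$.

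Once the Finsler inequality is in hand, the global inequality is immediate: for every piecewise $\mathcal{C}^1$ curve $\gamma\colon[0,1]\to D$ joining $x$ and $y$ one has $\int_0^1 h_D(\gamma,\dot\gamma)\,dt\le 2\int_0^1 g_D(\gamma,\dot\gamma)\,dt$, and taking infima yields $H_D(x,y)\le 2\rho_D(x,y)$. The argument is a real-analytic analogue of the Schwarz-lemma comparison used in several complex variables, with the positive harmonic gradient estimate playing the role of the Schwarz lemma and convexity providing the supporting affine functionals. I do not anticipate any serious obstacle.
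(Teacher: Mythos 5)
Your proof is correct and follows essentially the same route as the paper: the infinitesimal bound comes from composing a conformal harmonic disk with a supporting affine functional at each finite endpoint of $L_{x,v}$ and applying the gradient estimate $|\nabla u(0)|\le 2u(0)$ for positive harmonic functions on $\D$, which is exactly the content of the paper's half-space computation (Lemma \ref{halfp}) combined with Thales's theorem after a rigid change of coordinates. The only cosmetic difference is in the global inequality: you integrate the Finsler inequality, relying on the classical fact (asserted in the paper's preliminaries) that $H_D$ is the intrinsic distance associated with $h_D$, whereas the paper applies its Lemma \ref{cvxest1} at each of the two endpoints $a,b$ and adds the resulting logarithmic bounds; both arguments reduce to the same half-space comparison.
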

\proof
Let $x,y\in D$ be two distinct points and $a,b\in\partial^*D$ as usual. By Lemma \ref{cvxest1} we have
$$\rho_D(x,y)\geq\frac{1}{2}\log\left(\frac{||y-a||}{||x-a||}\right), \ \ \ \rho_D(x,y)\geq\frac{1}{2}\log\left(\frac{||x-b||}{||y-b||}\right).$$
Adding these two inequalities, we obtain $2\rho_D(x,y)\geq H_D(x,y)$.

Consider the line $L:=x+\R v$ and $a,b\in\partial^* D$ as usual. If $a=b=\infty$ then $h_D(x,v)=0$, so there is nothing to prove. Suppose $a\in\R^d$; by a rigid change of coordinates, we can assume that $a$ is the origin and $D\subset \H^d$. By Lemma \ref{halfp} and the Thales's theorem, we have
$$g_D(x,v)\geq g_{\H^d}(x,v)=\frac{|v_1|}{2x_1}=\frac{||v||}{2||x-a||}.$$
Similarly, if $b\in\R^d$, we obtain 
$$g_D(x,v)\geq\frac{||v||}{2||x-b||}.$$
Adding these two inequalities, we obtain $2g_D(x,y)\geq h_D(x,y)$.
\endproof

It is not clear whether the constant 2 in the previous proposition is sharp, but Example 12.2 in \cite {DrFor} shows that the sharp constant must be greater than or equal to $\frac{4}{\pi}>1$. Furthermore, $\frac{4}{\pi}$ is sharp constant of the Schwarz lemma for bounded harmonic functions on the disk (see for example \cite{KV}). Therefore, it is natural to formulate the following conjecture.

\begin{conjecture}
Let $D\subset\R^d$ $(d\geq3)$ be a convex domain. Then 
$$h_D(x,v)\leq\frac{4}{\pi}g_D(x,v), \ \ x\in D,v\in\R^d$$
and
$$H_D(x,y)\leq\frac{4}{\pi}\rho_D(x,y), \ \ \ x,y\in D.$$
Moreover, the two inequalities are sharp.
\end{conjecture}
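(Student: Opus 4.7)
The plan is to reduce the distance version to the infinitesimal inequality $h_D(x,v)\le\frac{4}{\pi}g_D(x,v)$ by integration along piecewise $\mathcal{C}^1$ curves (using that $H_D$ and $\rho_D$ are the intrinsic distances of the Finsler metrics $h_D$ and $g_D$), and to base the infinitesimal estimate on the sharp Heinz-Schwarz inequality for bounded real harmonic functions on $\D$: if $u\colon\D\to(A,B)$ is harmonic with $u(0)=u_0$, then
$$|\nabla u(0)|\le\frac{2(B-A)}{\pi}\sin\!\left(\frac{\pi(u_0-A)}{B-A}\right),$$
which, combined with the elementary inequality $\sin(\pi t)\le 4t(1-t)$ on $[0,1]$ (equality at $t=1/2$), yields $|\nabla u(0)|\le\frac{8(u_0-A)(B-u_0)}{\pi(B-A)}$. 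The constant $\frac{4}{\pi}$ appears precisely in the symmetric case $u_0=(A+B)/2$.

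As a model case I would compute the minimal metric of an affine slab $S=\{y\in\R^d:\langle y,e\rangle\in(A,B)\}$ in dimension $d\ge 3$, obtaining, in analogy with Lemma \ref{halfp},
$$g_S(x,v)=\frac{\pi|\langle v,e\rangle|}{2(B-A)\sin\!\left(\frac{\pi(\langle x,e\rangle-A)}{B-A}\right)}.$$
The lower bound follows by applying the Heinz-Schwarz estimate to $\langle f,e\rangle$ for any $f\in\CH(\D,S)$ with $f(0)=x$ and $f_x(0)=rv$. For the matching upper bound one prescribes $\langle f,e\rangle$ to be the extremal Heinz harmonic function and extends to a conformal harmonic map through the Weierstrass-type representation $f=\Re F$ with $F=(F_1,\ldots,F_d)$ holomorphic satisfying the null condition $\sum_k(F_k')^2\equiv 0$: this can be solved in $d\ge 3$ by, for instance, $F_2'=iF_1'$ and $F_k'=0$ for $k\ge 3$, since the slab places no constraint on the coordinates $f_2,\ldots,f_d$. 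Using $\sin(\pi t)\le 4t(1-t)$ one then obtains $g_S(x,v)\ge\frac{\pi}{4}h_S(x,v)$, with equality on the midsection of $S$; this will simultaneously yield the sharpness of $\frac{4}{\pi}$ via a slab example in the spirit of \cite[Example 12.2]{DrFor}.

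To transfer the estimate from slabs to a general convex $D$, one has $g_D(x,v)\ge g_{S_e}(x,v)$ for the tightest slab $S_e=\{A_e<\langle\cdot,e\rangle<B_e\}$ containing $D$ in each direction $e$ of the unit sphere. The main obstacle is that the depths $\alpha_e=\langle x,e\rangle-A_e$ and $\beta_e=B_e-\langle x,e\rangle$ generically strictly exceed $\alpha|\langle v,e\rangle|/||v||$ and $\beta|\langle v,e\rangle|/||v||$ respectively (where $\alpha=||x-a||$, $\beta=||x-b||$), with equality only when the supporting hyperplanes of $D$ at the endpoints $a,b$ of $L_{x,v}\cap\partial^*D$ are parallel and both perpendicular to $e$. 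Consequently no single slab suffices to recover the target $\frac{\pi}{4}h_D(x,v)=\frac{\pi||v||}{8}\left(\frac{1}{\alpha}+\frac{1}{\beta}\right)$. To close this gap I would try to exploit two Schwarz estimates simultaneously, using the directions $e_a=-n_a$ and $e_b=n_b$ of the outer unit normals at $a$ and $b$: each one alone already recovers exactly one of the terms $||v||/\alpha$ or $||v||/\beta$, and a coupled estimate --- for example arising from a sharp Heinz-type inequality for harmonic maps into the wedge $H_a\cap H_b$, or from a test harmonic function tailored to both boundary tangencies --- should force both terms to appear on the same conformal harmonic disk. This step is the genuine crux of the conjecture and appears to require an ingredient beyond the single-slab comparison.
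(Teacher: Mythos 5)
The statement you were given is not a theorem of the paper but an open conjecture: the paper proves only the weaker inequality with constant $2$ (Proposition \ref{gvsh}), obtained by comparing $D$ with a half-space at each of the two endpoints $a,b$ of $L_{x,v}$ and adding the resulting bounds $g_D(x,v)\geq \|v\|/(2\|x-a\|)$ and $g_D(x,v)\geq \|v\|/(2\|x-b\|)$. The evidence the paper cites for the constant $4/\pi$ (the sharp harmonic Schwarz constant of \cite{KV} and Example 12.2 of \cite{DrFor}) is exactly what your strategy is built on, so the proposal is aimed in a sensible direction; but it does not, and by your own admission cannot yet, prove the statement.

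Concretely, the preliminary steps are essentially sound: the Heinz--Schwarz bound $|\nabla u(0)|\leq \frac{2(B-A)}{\pi}\sin\!\left(\frac{\pi(u_0-A)}{B-A}\right)$, the elementary inequality $\sin(\pi t)\leq 4t(1-t)$, and the slab formula for $g_S$ (whose upper bound still needs the verification that, for $d\geq3$, one can prescribe the full $1$-jet $f_x(0)=rv$ with $\langle f,e\rangle$ equal to the extremal Heinz function and $\nabla\langle f,e\rangle(0)$ aligned with the $x$-axis of $\D$, while solving the null condition $\sum_k(F_k')^2=0$ in the remaining components --- this is where $d\geq3$ enters) together give $h_S\leq\frac{4}{\pi}g_S$ for a slab, with equality on the midsection, which also settles the sharpness assertion. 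The genuine gap is the transfer to a general convex $D$, and it is exactly where you locate it. A slab $S\supset D$ in a direction $e$ only controls $|\langle v,e\rangle|$ against the distances to the two bounding hyperplanes of $S$; unless the supporting hyperplanes of $D$ at $a$ and at $b$ are parallel and orthogonal to $L_{x,v}$, those widths strictly exceed the relevant quantities built from $\|x-a\|$ and $\|x-b\|$, and in the generic case a single slab or half-space recovers only one of the two terms of $h_D$ with the factor $\frac{1}{2}$ --- i.e. precisely Proposition \ref{gvsh} again. The ``coupled estimate'' you invoke to see both boundary tangencies on the same conformal harmonic disk (a sharp Schwarz-type inequality for harmonic maps into the wedge $H_a\cap H_b$) is not established anywhere and is essentially a reformulation of the conjecture rather than a tool for proving it. As it stands, your argument proves the conjectured inequality only for slabs and for configurations reducible to a single slab, plus sharpness; for a general convex domain it yields nothing beyond the paper's constant $2$.
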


It is also natural to wonder when the two metrics are bilipschitz. However, this is not generally the case for a generic convex domain, as exemplified by the following example.

\begin{example}
Let $D\subset\R^d$ $(d\geq3)$ be a convex domain. Then the inequality
$$g_D(x,v)\leq A h_D(x,v),$$
for some $A\geq1$ is in general false. For example, let $D\subset\R^{d-1}$ be a bounded convex domain and consider $D'=\R\times D$. Clearly,
$$h_{D'}(p,e_1)=0$$
but $$g_{D'}(p,e_1)>0$$ since $D'$ is hyperbolic, as it does not contain any 2-dimensional affine subspaces (Theorem \ref{thDrFor}).
\end{example}

Conversely, for strongly convex domains, we have the following result.

\begin{proposition}
	Let $D\subset\R^d$ $(d\geq3)$ be a strongly convex domain. then there exists $A\geq1$ such that
	$$g_D(x,v)\leq Ah_D(x,v), \ \ x\in D,v\in\R^d.$$
\end{proposition}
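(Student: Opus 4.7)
The plan is to combine the boundary estimate for the minimal metric from Theorem \ref{BSMCestimate} with a matching lower estimate for the Hilbert metric near $\partial D$, and to handle the interior of $D$ by compactness. Since $D$ is strongly convex, every interior principal curvature of $\partial D$ is positive; in particular $\nu_1+\nu_2>0$, so by Remark \ref{rmknu} the domain $D$ is strongly minimally convex. Theorem \ref{BSMCestimate} then supplies constants $\epsilon>0$ and $A_1\geq 1$ with
$$g_D(x,v)\leq A_1\left(\frac{||v_N||}{2\delta_D(x)}+\frac{||v_T||}{\delta_D(x)^{1/2}}\right)$$
for all $x\in N_\epsilon(\partial D)\cap D$ and $v\in\R^d$.

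The main step is to establish the matching lower bound for $h_D$ near the boundary: there exist $\epsilon'>0$ and $c>0$ such that
$$h_D(x,v)\geq c\left(\frac{||v_N||}{\delta_D(x)}+\frac{||v_T||}{\delta_D(x)^{1/2}}\right)$$
for $x\in N_{\epsilon'}(\partial D)\cap D$ and $v\in\R^d$. Centring coordinates at $\pi(x)$ with $n_{\pi(x)}=-e_1$, strong convexity writes $\partial D$ locally as the graph of $\tfrac{1}{2}Q(y')+o(||y'||^2)$ for a positive definite quadratic form $Q$ whose eigenvalues are uniformly bounded below by some $\nu_0>0$; in these coordinates $|v_1|=||v_N||$ and $||v'||=||v_T||$. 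The line $L_{x,v}(t)=x+tv$ meets the leading quadratic model at the two roots $\tilde{t}_a<0<\tilde{t}_b$ of $\tfrac{1}{2}Q(v')\,t^2-v_1\,t-\delta_D(x)=0$, and a direct computation gives
$$\frac{1}{2}\left(\frac{1}{|\tilde{t}_a|}+\frac{1}{\tilde{t}_b}\right)=\frac{\sqrt{v_1^2+2\delta_D(x)\,Q(v')}}{2\delta_D(x)}\geq c\left(\frac{||v_N||}{\delta_D(x)}+\frac{||v_T||}{\delta_D(x)^{1/2}}\right).$$
The principal technical obstacle is showing that the $o(||y'||^2)$ error in the graph description perturbs the true Hilbert intersections only by bounded multiplicative factors, so that the same bound survives (with possibly smaller $c$) for the actual parameters $t_a,t_b$ and hence for $h_D(x,v)=\tfrac{1}{2}(|t_a|^{-1}+t_b^{-1})$; this is handled by a uniform Taylor argument using the $\mathcal{C}^2$ regularity of $\partial D$ and by taking $\epsilon'$ small.

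Combining the two boundary estimates yields $A_2\geq 1$ with $g_D\leq A_2\, h_D$ on $N_{\epsilon''}(\partial D)\cap D$ for $\epsilon'':=\min(\epsilon,\epsilon')$. On the complementary set $K:=\overline{D\setminus N_{\epsilon''}(\partial D)}$, which is a compact subset of $D$ since strongly convex domains are bounded, the Hilbert metric $h_D$ is continuous and strictly positive (a bounded convex domain contains no affine line), while $g_D$ is upper semicontinuous; hence both are bounded above and below by positive constants on $K\times\{v\in\R^d:||v||=1\}$. Absolute homogeneity in $v$ then gives a uniform ratio bound on $K\times\R^d$, and taking $A$ to be the maximum of $A_2$ and this interior constant concludes the proof.
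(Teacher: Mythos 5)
Your proof is correct and shares the paper's overall skeleton: deduce from Remark \ref{rmknu} that a strongly convex domain is strongly minimally convex, invoke Theorem \ref{BSMCestimate} for the upper bound on $g_D$ near $\partial D$, prove a matching lower bound for $h_D$ there, and finish in the interior by compactness (a step the paper leaves implicit but which you spell out correctly). The difference lies in how the Hilbert-metric lower bound is obtained. The paper avoids any Taylor expansion: it bounds $h_D(x,v)$ from below by a constant times $||v||/\delta_D(x,v)$, where $\delta_D(x,v)$ is the distance from $x$ to $\partial D$ along the line $x+\R v$, and then uses two soft geometric facts — the supporting hyperplane at $\pi(x)$ gives $\delta_D(x,v)\leq \delta_D(x)\,||v||/||v_N||$, and strong convexity (an enclosing tangent sphere of uniform radius, via the power of the point) gives $\delta_D(x,v)\leq c\,\delta_D(x)^{1/2}$ — and averages the two resulting bounds. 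Your route through the local quadratic graph model yields the same estimate, and indeed the sharper exact asymptotics $\tfrac{1}{2}\bigl(|t_a|^{-1}+t_b^{-1}\bigr)=\sqrt{v_1^2+2\delta_D(x)Q(v')}\,/\,(2\delta_D(x))$ for the model, but it is heavier, and the error-control step is the one place needing more care than you acknowledge: the \emph{far} intersection point of $L_{x,v}$ with $\partial D$ need not lie in the coordinate patch where the graph description is valid (e.g.\ when $v$ is close to normal), so the $o(||y'||^2)$ perturbation argument does not literally control $t_b$. This is easily repaired — either discard the far term using $\frac{1}{||x-a||}+\frac{1}{||x-b||}\geq \frac{1}{||x-a||\wedge||x-b||}$ and control only the nearer root, or replace the local Taylor model by the global enclosing sphere — so it is a cosmetic rather than structural gap, and the paper's version of the lemma is the more economical of the two.
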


The proposition follows directly from the Theorem \ref{BSMCestimate}, as strongly convex domains are strongly minimally convex, along with the following estimate.

\begin{lemma}
Let $D\subset\R^d$ $(d\geq2)$ be a strongly convex domain. Then there exists $\epsilon>0$ and $A\geq1$ such that
for all $x\in N_\epsilon(\partial D)\cap D$ and $v\in\R^d$, we have
$$h_D(x,v)\geq A^{-1}\left(\frac{||v_N||}{2\delta_D(x)}+\frac{||v_T||}{\delta_D(x)^{1/2}} \right).$$
\end{lemma}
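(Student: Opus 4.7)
The plan is to bound $h_D(x,v)$ from below by locating the exit point of the line $L_{x,v}=x+\R v$ from $D$, using strong convexity to trap $D$ locally inside a paraboloid. By $\mathcal{C}^2$-smoothness and compactness of $\partial D$, the principal curvatures of $\partial D$ are uniformly bounded below by some $\lambda>0$; a uniform second-order Taylor expansion of a local boundary graph then yields constants $c>0$ and $\epsilon_0>0$ (independent of the boundary point) such that for every $\xi\in\partial D$, in the affine frame centered at $\xi$ with $n_\xi$ as the positive $e_1$-axis,
$$D\cap B(\xi,\epsilon_0)\subset P_\xi:=\{(x_1,y)\in\R\times\R^{d-1}:x_1\leq -c||y||^2\}.$$
The mildly delicate point is securing the uniformity of this inclusion over $\xi\in\partial D$; it follows from compactness and the uniform continuity of the Hessian of a defining function.

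Now fix $x\in N_\epsilon(\partial D)\cap D$, write $\delta:=\delta_D(x)$, and take $v\in\R^d$. By homogeneity I may assume $||v||=1$; replacing $v$ by $-v$ if necessary, I may also assume $\alpha:=\langle v,n_{\pi(x)}\rangle\geq 0$, so that $||v_N||=\alpha$. In the local frame at $\xi:=\pi(x)$ we have $x=(-\delta,0)$ and $x+tv=(-\delta+t\alpha,\,tv_T)$. The inclusion in $P_\xi$ therefore forces the first exit point $a\in\partial D$ of the half-line $\{x+tv:t>0\}$ from $D$ to satisfy $||x-a||\leq t^{**}$, where $t^{**}$ is the positive root of
$$c||v_T||^2\,t^2+\alpha\,t-\delta=0,\qquad\text{namely}\qquad t^{**}=\frac{2\delta}{\alpha+\sqrt{\alpha^2+4c\delta||v_T||^2}}.$$
Because $||v_N||^2+||v_T||^2=1$ forces $t^{**}=O(\delta^{1/2})$, for $\epsilon$ small enough the segment $\{x+tv:0\leq t\leq t^{**}\}$ stays inside $B(\xi,\epsilon_0)$, legitimizing the use of the paraboloid inclusion along the whole segment.

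Finally, using $\sqrt{\alpha^2+4c\delta||v_T||^2}\geq 2\sqrt{c\delta}\,||v_T||$ I deduce
$$\frac{1}{||x-a||}\geq\frac{||v_N||+2\sqrt{c\delta}\,||v_T||}{2\delta},$$
and dropping the nonnegative contribution from the opposite endpoint $b$ in the formula for $h_D(x,v)$ yields
$$h_D(x,v)\geq\frac{1}{2\,||x-a||}\geq\frac{||v_N||}{4\delta_D(x)}+\frac{\sqrt{c}\,||v_T||}{2\,\delta_D(x)^{1/2}},$$
which is the claimed lower bound with $A:=\max\{2,\,2/\sqrt{c}\}$.
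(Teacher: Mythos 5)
Your proof is correct and follows essentially the same route as the paper: both arguments lower-bound $h_D(x,v)$ by the reciprocal of the distance from $x$ to $\partial D$ along the line $x+\R v$, and then use strong convexity to bound that chord length from above. The paper gets the two terms from two separate inequalities (the tangent half-space at $\pi(x)$ yields the $||v_N||/\delta_D(x)$ term, and the $O(\delta_D(x)^{1/2})$ chord bound yields the $||v_T||/\delta_D(x)^{1/2}$ term) and then averages, whereas your osculating-paraboloid quadratic produces both terms simultaneously --- a harmless, slightly more explicit variant of the same idea.
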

\proof
We may assume $v$ nonzero.
Define $$\delta_D(x,v):=\inf\{||x-y||:y\in\partial D\cap(x+\R v)\}.$$
Clearly,
$$h_D(x,v)\geq\frac{||v||}{\delta_D(x,v)}.$$
By a simple geometric argument, for all $x\in N_\epsilon(\partial D)\cap D$ we have
$$\frac{\delta_D(x,v)}{\delta_D(x)}\leq \frac{||v||}{||v_N||}$$
so
$$\frac{||v||}{\delta_D(x,v)}\geq\frac{||v_N||}{\delta_D(x)}.$$
Since $D$ is strongly convex, we can find $c>1$ such that $\delta_D(x,v)\leq c\delta_D(x)^{1/2}$. Thus, we have
$$\frac{||v||}{\delta_D(x,v)}\geq c^{-1}\frac{||v_T||}{\delta_D(x)^{1/2}}$$
which implies
$$h_D(x,v)\geq\frac{||v||}{\delta_D(x,v)}\geq \frac{c^{-1}}{2}\left(\frac{||v_N||}{2\delta_D(x)}+\frac{||v_T||}{\delta_D(x)^{1/2}} \right).$$
\endproof

\end{document}